\documentclass[12pt]{article}
\usepackage{amsmath,amsfonts,amsthm,amssymb,mathrsfs,latexsym,paralist, xypic}
\usepackage{soul}
\usepackage{tikz,hyperref,pgf}

\usetikzlibrary{arrows,automata}
\usepackage{slashed}

\tikzstyle{V}=[fill=black,circle,scale=0.2, outer sep = 4pt]

\usepackage{comment}

\newtheorem{thm}{Theorem}[section]
\newtheorem{prop}[thm]{Proposition}

\theoremstyle{remark}
\newtheorem{rmk}[thm]{Remark}
\theoremstyle{example}
\newtheorem{example}[thm]{Example}
\theoremstyle{definition}
\newtheorem{defn}[thm]{Definition}

\newcommand{\C}{\mathbb{C}}

\newcommand{\T}{\mathbb{T}}

\newcommand{\BB}{\mathcal{B}}
\newcommand{\PB}{\partial\mathcal{B}}

\newcommand{\R}{\mathbb{R}}
\newcommand{\N}{\mathbb{N}}
\newcommand{\Z}{\mathbb{Z}}

\usepackage[margin=0.9in]{geometry}
\AtEndDocument{\bigskip{\small%
  \textsc{Jaeseong Heo : Department of Mathematics, Research Institute for Natural Sciences,
  Hanyang University, Seoul 04763, Republic of Korea} \par
  \textit{E-mail address}: \texttt{hjs@hanyang.ac.kr} \par
  \addvspace{\medskipamount}
\textsc{Sooran Kang : College of General Education, Chung-Ang University,
 84 Heukseok-ro, Dongjak-gu, Seoul, Republic of Korea} \par
  \textit{E-mail address}: \texttt{sooran09@cau.ac.kr}  \par
  \addvspace{\medskipamount}
  \textsc{Yongdo Lim : Department of Mathematics, Sungkyunkwan University, Suwon, Republic of Korea} \par
  \textit{E-mail address}: \texttt{ylim@skku.edu}
}}

\begin{document}

\title{Dirichlet forms and ultrametric Cantor sets associated to higher-rank graphs}
\author{Jaeseong Heo, Sooran Kang and Yongdo Lim}

\date{\today}

\maketitle

\begin{abstract}
The aim of this paper is to study the heat kernel and the jump kernel of the Dirichlet form
associated to ultrametric Cantor sets $\partial\BB_\Lambda$ that is the infinite path space
of the stationary $k$-Bratteli diagram $\BB_\Lambda$, where $\Lambda$ is a finite strongly connected $k$-graph.
The Dirichlet form which we are interested in is induced by an even spectral triple
$(C_{\operatorname{Lip}}(\PB_\Lambda), \pi_\phi, \mathcal{H}, D, \Gamma)$ and is  given by
\[
Q_s(f,g)=\frac{1}{2} \int_{\Xi} \operatorname{Tr}\big(\vert D\vert^{-s} [D,\pi_{\phi}(f)]^{\ast} [D,\pi_\phi(g)] \big) \, d\nu(\phi),
\]
where $\Xi$ is the space of choice functions on $\partial \BB_\Lambda \times \partial \BB_\Lambda$.
There are two ultrametrics, $d^{(s)}$ and $d_{w_\delta}$, on $\partial \BB_\Lambda$
which make  the infinite path space $\PB_\Lambda$ an ultrametric Cantor set.
The former $d^{(s)}$ is associated to the eigenvalues of Laplace-Beltrami operator $\Delta_s$ associated to $Q_s$,
and the latter $d_{w_\delta}$ is associated to a weight function $w_\delta$ on $\BB_\Lambda$,
where $\delta\in (0,1)$.
We show that the Perron-Frobenius measure $\mu$ on $\partial \BB_\Lambda$ has the volume doubling property
with respect to both $d^{(s)}$ and $d_{w_\delta}$ and we study the asymptotic behavior  of the heat kernel associated to $Q_s$.
Moreover, we show that the Dirichlet form $Q_s$ coincides with a Dirichlet form $\mathcal{Q}_{J_s, \mu}$
which is associated to a jump kernel $J_s$ and the measure $\mu$ on $\partial \BB_\Lambda$,
and we investigate the asymptotic behavior and moments of displacements of the process.

\end{abstract}

\tableofcontents

\section{Introduction}

Higher-rank graphs (or $k$-graphs) $\Lambda$ and their $C^*$-algebras $C^*(\Lambda)$ were introduced in \cite{KP}
as a generalization of directed graphs and Cuntz-Krieger algebras
in order to give a combinatorial model and to study abstract properties of $C^*(\Lambda)$ such as simplicity and ideal structure.
Higher-rank graphs and the associated $C^*$-algebras have extensively been studied in last two decades
on the classification theory, K-theory of $C^*(\Lambda)$, and the study of spectral triples and of KMS states.
(See \cite{CKSS, ruiz-sims-sorensen, Evans, FGJKP-monic, FGJKP2, aHLRS3} and references therein).
The $C^*$-theory of Cuntz algebras and Cuntz-Krieger algebras associated to graphs (and $k$-graphs)
not only have played an important role in operator algebras and noncommutative geometry,
but also have been used in many applications in engineering and physics:
branching laws for endomorphisms, Markov measures and topological Markov chains,
wavelets and multiresolution analysis, and iterated function systems (IFS) and fractals.
(See \cite{abe-kawamura, FGKP, BJ-paths spaces, MP, dutkay-jorgensen-iterated} and references therein).

In the recent papers \cite{FGJKP1, FGJKP2}, authors have studied spectral triples and their relations
to $C^*$-representations and wavelet decompositions of the Cuntz algebra $\mathcal{O}_N$,
the Cuntz-Krieger algebra $\mathcal{O}_{A}$ of an adjacency matrix $A$ and $C^*(\Lambda)$ based on the works of \cite{PB, JS}.
In fact, Pearson-Bellissard \cite{PB} originally constructed a family of spectral triples
to study geometry of ultrametric Cantor sets induced from weighted trees.
Not only they studied $\zeta$-function, Dixmier trace and their relations to (fractal) dimensions
and (Hausdorff, probability) measures on ultrametric Cantor sets,
but also they constructed a family of Dirichlet forms from the spectral triples and studied the eigenvalues,
eigenspaces of the Laplace-Beltrami operators associated to the Dirichlet forms,
and investigated the associated diffusion on triadic Cantor sets.
Using the framework developed in \cite{PB}, Julien and Savinien \cite{JS} applied the theory
to study substitution tilings on a weighted Bratteli diagram
(which is a special kind of tree).

Although it is known in \cite{PB, JS} that the basis of eigenspaces of the Laplace-Beltrami operators
are related to wavelets (for example, Haar wavelets for the case of the triadic Cantor set \cite{PB}),
it was first noticed in \cite{FGJKP1, FGJKP2} that particular representations of $C^*$-algebras
(Cuntz algebras, Cuntz Krieger algebras and $C^*(\Lambda)$) studied in \cite{FGKP} give
``scaling and translation'' operators that relate the eigenspaces to wavelets/wavelet decompositions in general cases.
In \cite{FGJKP2}, authors associated a finite strongly connected $k$-graph $\Lambda$
to a stationary $k$-Bratteli diagram $\BB_\Lambda$ and obtained Cantor sets
from the infinite path space $\PB_\Lambda$ of $\BB_\Lambda$ and ultrametric $d_{w_\delta}$
associated to weight functions $w_\delta$ on $\PB_\Lambda$,
where $\delta\in (0,1)$.\footnote{This number $\delta\in (0,1)$ is the abscissa of convergence
of the $\zeta_\delta$-function associated to the spectral triples. See Theorem~3.8 of \cite{FGJKP2}.}
Then they \cite{FGJKP2} were also able to associate Pearson-Bellissard spectral triples to the $k$-graph $\Lambda$
under a mild hypothesis and obtain similar results to those of \cite{PB}
for $\zeta$-function, Dixmier trace  and Laplace-Beltrami operators $\Delta_s$ and their eigenspaces.
Note that the Dixmier trace induces a self-similar measure $\mu$ on $\partial \BB_\Lambda$
which agrees with the Perron-Frobenius measure $M$ introduced in \cite{aHLRS3} under a mild hypothesis.
(See Corollary~3.10 of \cite{FGJKP2}).

For the spectral triple $(C_{\operatorname{Lip}}(\PB_\Lambda), \pi_\phi, \mathcal{H}, D,\Gamma)$
\footnote{See the details in Section~\ref{sec:Dirichlet form}.},
the Dirichlet form $Q_s$ on $L^2(\PB_\Lambda, \mu)$  is given by
\[
Q_s(f,g)=\frac{1}{2} \int_{\Xi} \operatorname{Tr}\big(\vert D\vert^{-s} [D,\pi_{\phi}(f)]^{\ast} [D,\pi_\phi(g)] \big) \, d\nu(\phi),
\]
where $\Xi$ is the space of choice functions on $\partial \BB_\Lambda \times \partial \BB_\Lambda$
and $\nu$ is a measure induced from $\mu$.
Then a classical theory implies that there exists a non-positive definite self-adjoint operator $\Delta_s$,
called the Laplace-Beltrami operator, given by
\[
\langle -\Delta_s(f), g \rangle = Q_s(f,g).
\]
(See Section~\ref{sec:Dirichlet form} of this paper and Section~8.3 of \cite{PB}).
With the self-similar measure $\mu$ on $\PB_\Lambda$ (given in \eqref{eq:measure_kBD}),
we get a metric measure space $(\PB_\Lambda, \mu, d_{w_\delta})$.
Note that the Markovian semigroup or the heat kernel associated to the Dirichlet form $Q_s$
has not been investigated  in \cite{FGJKP2}.

According to Section 13 of \cite{Kig2010}, Kigami showed that the  Dirichlet form
associated to Pearson-Bellissard spectral triple of an ultrametric Cantor set
induced from a weighted tree is a special kind of the Dirichlet forms
on Cantor sets constructed in \cite{Kig2010} without a spectral triple.
In fact, the Cantor set in \cite{Kig2010} is obtained from a random walk on a tree,
and the measure and the Dirichlet form on it are given in terms of effective resistances
(see \cite[Section~2, Section~4]{Kig2010}).
Moreover, Kigami described how his theory is related to noncommutative geometry
by identifying a generalized jump kernel for the Dirichlet form of \cite{PB} and comparing his results
 ($\zeta$-function, self-similar measure, asymptotic behavior of the heat kernel)
to those of \cite{PB} when the Cantor set is given by the complete binary tree.

We think that Kigami's observation on the complete binary tree can be extended to a more general tree,
in particular, a stationary $k$-Bratteli diagram, associated to a finite strongly connected $k$-graph.
In this paper, we study the asymptotic behavior of the associated heat kernel and identify the jump kernel $J_s$
for the process associated to the Dirichlet form  $Q_s$.
It turned out that the Dirichlet form $Q_s$ coincides with the Dirichlet form $\mathcal{Q}_{J_s,\mu}$
given in Definition~\ref{def:Dirichlet-jump}, where $J_s$ is a generalized jump kernel given in \eqref{eq:J_s}
(see Proposition~\ref{prop:Dirichlet-J}).
Also, the asymptotic behavior of the heat kernel $p_t$ associated to $Q_s$
describes the jump process across the gaps of the Cantor set (see Theorem~\ref{thm:asymp-d-w}).
Note that Chen and Kumagai \cite{ChK}  have investigated the heat kernel estimates
for this kind of jump processes associated to a jump-type Dirichlet form such as $\mathcal{Q}_{J_s,\mu}$.

We begin in Section~\ref{sec:background} with definitions of $k$-graphs and  $k$-Bratteli diagrams
and discuss how we obtain ultrametric Cantor sets $\mathcal{C}$ using weighted $k$-Bratteli diagrams associated to $k$-graphs.
In Section~\ref{sec:Dirichlet form}, we first review the construction of Pearson-Bellissard's spectral triples
and Dirichlet forms $Q_s$, $s\in \R$ on $\mathcal{C}$.
Then we show in Proposition~\ref{prop:eigenvalue} that the Laplace-Beltrami operator $\Delta_s$
associated to $Q_s$ is in fact unbounded when $s<2+\delta$, where $\delta\in (0,1)$.
Moreover, we obtain the intrinsic ultrametric $d^{(s)}$ associated to the eigenvalues of $\Delta_s$ in Proposition~\ref{prop:intrinsic_metric}.
In Section~\ref{sec:heat-kernel}, we first show that the intrinsic ultrametric $d^{(s)}$
has the volume doubling property with respect to the self-similar measure $\mu$ on the infinite path space $\partial\BB_\Lambda$.
We prove our main theorem, which states that both ultrametrics $d^{(s)}$ and $d_{w_\delta}$
are  asymptotically equivalent in the sense that $d^{(s)}(x,y) \asymp (d_{w_\delta}(x,y))^{2+\delta-s}$,
where $\delta\in (0,1)$ and see \eqref{eq:defn-asymp} for `$\asymp$'.
Finally, we discuss the heat kernel and its asymptotic behavior and moments of displacement of the process
associated to the Dirichlet form $Q_s$ in Theorem~\ref{thm:asymp-d-w}.

\section*{Acknowledgement}
The first author J.H. and the third author Y.L. were supported by National Research Foundation
of Korea (NRF) grant funded by the Korea government (MEST) No.NRF-2015R1A3A2031159.
The second author S.K. was supported by Basic Science Research Program
through NRF grant funded by the Ministry of Education No.NRF-2017R1D1A1B03034697.

\section{$k$-graphs and ultrametric Cantor sets}
\label{sec:background}

\subsection{$k$-graphs and $k$-Bratteli diagrams}

Throughout this article, $\N$ denotes the non-negative integers
and $|S|$ denotes the number of elements in a set $S$ unless specified otherwise.

A \emph{directed graph} is given by a quadruple $E = (E^0, E^1, r, s)$,
where $E^0$ is the set of vertices of the graph, $E^1$ is the set of edges,
and maps $r, s: E^1 \to E^0$ denote the range and source of each edge.
A vertex $v$ in a directed graph $E$ is a \emph{source} if $r^{-1}(v) = \emptyset$.

\begin{defn}\cite[Definition~1.1]{KP}
\label{def:k-graph}
A \emph{higher-rank graph (or k-graph)} is a small category $\Lambda$ equipped with a degree functor
$d: \Lambda \to \N^k$ satisfying the \emph{factorization property}: whenever $\lambda$ is a morphism in $\Lambda$
such that $d(\lambda) = m+n$, there are unique morphisms $\mu, \nu \in \Lambda$
such that $d(\mu) = m, d(\nu) = n$, and $\lambda = \mu \nu$.
\end{defn}

We often consider $k$-graphs as a generalization of directed graphs.
For $ n \in \N^k$, we write
\[ \Lambda^n := \{ \lambda \in \Lambda: d(\lambda) = n\} ,\]
and we call morphisms $\lambda\in \Lambda^n$ \emph{paths} with degree $n\in \N^k$.
If $n=0\in \N^k$, then $\Lambda^0$ is the set of objects of $\Lambda$,
which we also refer to as the \emph{vertices} of $\Lambda$.
There are maps $r, s: \Lambda \to \Lambda^0$ which identify the range and source of each morphism, respectively.
For $v,w \in \Lambda^0$ and $n\in \N^k$, we define two sets
\[
v\Lambda^n := \{\lambda \in \Lambda^n : r(\lambda) = v\},
\quad  v\Lambda w=\{\lambda\in \Lambda : r(\lambda)=v, s(\lambda)=w\}.
\]
We say that a $k$-graph  $\Lambda$ is \emph{finite} if $|\Lambda^n | < \infty$ for all $n \in \N^k$;
$\Lambda$ \emph{has no sources} (or \emph{is source-free})
if $|v\Lambda^n |  > 0$ for all $v \in \Lambda^0$ and $n \in \N^k$;
$\Lambda$ is \emph{strongly connected} if, for all $v, w \in \Lambda^0$, $v\Lambda w \not= \emptyset$.

For each $1 \leq i \leq k$, we write $e_i$ for the standard basis vector of $\N^k$,
and define a matrix $A_i \in M_{\Lambda^0}(\N)$ by
\[
A_i(v, w) = | v \Lambda^{e_i} w |.
\]
We call $A_i$ the \emph{$i$-th vertex matrix} of $\Lambda$.
Note that the factorization property implies that the matrices $A_i$ commute, i.e. $A_i A_j=A_j A_i$ for $1\le i, j \le k$.
In fact, for a pair of composable edges $f \in v\Lambda^{e_i} u$, $g\in u\Lambda^{e_j} w$,
the factorization property implies that there exist unique edges
$\tilde{f}\in \Lambda^{e_i} w$, $\tilde{g}\in v\Lambda^{e_j}$ such that
\[
fg = \tilde{g}\tilde{f}
\]
since $d(fg)=e_i+e_j=e_j+e_i=d(\tilde{g}\tilde{f})$.

\begin{example} \label{ex:graph}
If we let $\Lambda_E$ be the category of finite paths of a directed graph $E$,
then $\Lambda_E$ is a $1$-graph with the degree functor $d:\Lambda_E\to \N$
which takes a finite path $\lambda$ to its length $|\lambda|$ (the number of edges making up $\lambda$).

Another fundamental example of a $k$-graph is the following.
For any $k\ge 1$, let $\Omega_k$ be the small category with
$\operatorname{Obj}(\Omega_k)=\N^k$ and $\operatorname{Mor}(\Omega_k)=\{(p,q)\in \N^k\times \N^k : p\le q\}$
where $p \le q$ means that each entry $p_i$ is less than and equal to $q_i$ for $1\le i \le k$.
The range and source maps $r,s:\operatorname{Mor}(\Omega_k)\to \operatorname{Obj}(\Omega_k)$ are given by $r(p,q)=p$ and $s(p,q)=q$.
If we define $d:\Omega_k\to \N^k$ by $d(p,q)=q-p$, then one can show that $(\Omega_k, d)$ is a $k$-graph.
\qed
\end{example}

According to \cite{KP}, we can define an infinite path in a $k$-graph $\Lambda$
by a $k$-graph morphism (meaning degree-preserving functor) $x:\Omega_k\to \Lambda$
and we write $\Lambda^\infty$ for the set of all infinite paths in $\Lambda$.
We endow $\Lambda^\infty$ with the topology generated by the collection of cylinder sets $\{Z(\lambda):\lambda\in\Lambda\}$,
where the cylinder set of $\lambda$ is defined by
\[
Z(\lambda)=\{\lambda x\in \Lambda^\infty\mid s(\lambda)=r(x)\}=\{y\in \Lambda^\infty\mid y(0,d(\lambda))=\lambda\}.
\footnote{Since $y\in \Lambda^\infty$, we have $y:\Omega_k\to \Lambda$ by definition.
So, $(0,d(\lambda))\in \operatorname{Mor}(\Omega_k)\subset \N^k \times \N^k$
and the image of $(0, d(\lambda))$ under $y$ should be an element of $\Lambda$ by definition.
Thus, $y(0,d(\lambda))=\lambda \in \Lambda$ means that the initial path of the infinite path $y$
with degree $d(\lambda)$ is the same as  $\lambda$.}
\]
We note that $Z(\lambda)$ is compact open for all $\lambda\in\Lambda$.
We also see that $\Lambda$ is a finite $k$-graph if and only if $\Lambda^\infty$ is compact.

Consider a family of commuting $N\times N$ matrices $\{A_1, \dots, A_k\}$ with non-negative entries.
We say that the family $\{A_1,\dots, A_k\}$ is \emph{irreducible}
if for each nonzero matrix $A_i$, there exists a finite subset $F\subset \N^k$ such that $A_F(s,t)>0$ for all $s,t\in N$,
where for $n=(n_1, \dots, n_k)\in \N^k$ and a finite subset $F$ of $\N^k$, we write
\[
A^n:=\prod_{i=1}^k A_i^{n_i}, \quad \text{and} \quad A_F:=\sum_{n\in F} A^n.
\]
By \cite[Lemma 4.1]{aHLRS3}, a $k$-graph $\Lambda$ is strongly connected
if and only if the family of vertex matrices $\{A_1, \ldots, A_k\}$ for $\Lambda$ is irreducible.
So, if a finite $k$-graph $\Lambda$ is strongly connected, then Proposition 3.1 of \cite{aHLRS3} implies that
there is a unique positive vector $\kappa^\Lambda \in (0, \infty)^{\Lambda^0}$
such that $\sum_{v \in \Lambda^0} \kappa^\Lambda_v =1$ and
\[
A_i \kappa^\Lambda = \rho_i \kappa^\Lambda \quad \text{for  all $1 \leq i \leq k$}
\]
where $\rho_i:= \rho(A_i)$ denotes the spectral radius of $A_i$.
Such an eigenvector $\kappa_\Lambda$ is called the \emph{(unimodular) Perron-Frobenius eigenvector} of $\Lambda$.
Proposition~8.1 of \cite{aHLRS3} shows that there is a unique Borel probability measure $M$
on $\Lambda^\infty$  that satisfies the self-similarity condition\footnote{For $\lambda\in \Lambda$,
$M(Z(\lambda))=\rho(\Lambda)^{-d(\lambda)} M(Z(s(\lambda)))$.} given by
\begin{equation}\label{eq:M-measure}
M(Z(\lambda))=\rho(\Lambda)^{-d(\lambda)} \kappa^\Lambda_{s(\lambda)} \quad \text{for all $\lambda\in \Lambda$},
\end{equation}
where $Z(\lambda)$ is a cylinder set of $\lambda\in \Lambda$ and $\kappa^\Lambda$ is
the unique Perron-Frobenius eigenvector of $\Lambda$.
More precisely, for $\lambda\in\Lambda$
\begin{equation}\label{eq:M-measure-2}
M(Z(\lambda))=\rho_1^{-d(\lambda)_1}\dots \rho_k^{-d(\lambda)_k} \kappa^\Lambda_{s(\lambda)},
\end{equation}
where $\rho(\Lambda)=(\rho_1,\dots,\rho_k)$ and $d(\lambda)=(d(\lambda)_1,\dots, d(\lambda)_k)$.
The measure $M$ is often called the Perron-Frobenius measure on $\Lambda^\infty$.

Now we describe Bratteli diagrams and $k$-Bratteli diagrams introduced in \cite{BJ-paths spaces, FGJKP2} as follows.

\begin{defn}\cite{BJ-paths spaces, FGJKP2}
\label{def:bratteli-diagram}
A \emph{Bratteli diagram} denoted by $\mathcal{B}$ is a directed graph with a vertex set
$\mathcal{B}^0= \bigsqcup_{n \in \N} {V}_n$, and an edge set $\mathcal{B}^1 = \bigsqcup_{n=1}^\infty {E}_n$,
where ${E}_n$ consists of edges whose source vertex lies in ${V}_{n+1}$ and whose range vertex lies in ${V}_n$.
A \textit{finite path} $\eta= \eta_1\cdots \eta_\ell$ is a finite sequence of edges
with $s(\eta_n ) = r(\eta_{n+1})$.
\end{defn}

\begin{defn}\cite[Definition~2.3]{FGJKP2}
\label{def-k-brat-diagrm-inf-path-space}
Given a Bratteli diagram $\mathcal{B}$,
the set of all infinite paths $\partial {\mathcal{B}}$ is defined by
\[
 \partial{\mathcal{B}} = \{ (x_n)_{n=1}^\infty : x_n \in {E}_n \text{ and } s(x_n) = r(x_{n+1})
 \text{ for all } n\in \N\setminus{\{0\}} \}.
\]
 For a finite path $\eta =  \eta_1 \eta_2 \cdots \eta_\ell$  of $\mathcal{B}$,
 we define the \emph{cylinder set} $[\eta]$ by
\[
[ \eta ] = \{ x =(x_n)_{n=1}^\infty \in \partial{\mathcal{B}}: x_i=\eta_i \text{ for all } \ 1 \leq i \leq \ell\}.
\]
\end{defn}

We note the collection $\mathcal{T}$ of all cylinder sets forms a compact open sub-basis
for a locally compact Hausdorff topology on $\partial{\mathcal{B}}$;
we will always consider $\partial {\mathcal{B}}$ with this topology.

\begin{defn}\cite[Definition~2.5]{FGJKP2}
\label{def-k-brat-diagrm}
Let $A_1, \ldots, A_k$ be commuting $N \times N$ matrices with non-negative integer entries.
The \emph{stationary $k$-Bratteli diagram} associated to the matrices $A_1,\ldots, A_k$
is given by a filtered set of vertices $\mathcal{B}^0 = \bigsqcup_{n\in \N} {V}_n$
and a filtered set of edges $\mathcal{B}^1 = \bigsqcup_{n=1}^\infty {E}_n$,
where the edges in ${E}_n$ go from ${V}_{n+1}$ to ${V}_{n}$, such that
\begin{itemize}
\item[(i)] for each $n \in \N$, each ${V}_n$ consists of $N$ vertices, which we will label $1,\ldots, N$;
\item[(ii)] when $ n \equiv i \pmod{k}$, there are $A_i(p,q)$ number of edges
whose range is the vertex $p$ of ${V}_n$ and whose source is the vertex $q$ of ${V}_{n+1}$.
\end{itemize}
\end{defn}

\begin{rmk} \label{rem:prob measure}
For a finite strongly connected $k$-graph $\Lambda$,
let $\mathcal{B}_\Lambda$ be  the corresponding stationary $k$-Bratteli diagram.
We first note that Proposition~2.10 of \cite{FGJKP2} implies that the infinite path space
$\Lambda^\infty$ of $\Lambda$ is homeomorphic to the infinite path space $\PB_\Lambda$ of $\BB_\Lambda$.
Thus, we can obtain the unique probability measure $\mu$ on $\partial \mathcal{B}_\Lambda$
satisfying self-similarity condition by transferring the Perron-Frobenius measure $M$ on $\Lambda^\infty$ given in \eqref{eq:M-measure} as follows.
Since we have ${V}_n\cong \Lambda^0$ for all $n\in \N$, we write $(\kappa_v)_{v\in \Lambda^0}$ (or $(\kappa_v)_{v\in V_n}$)
again for the corresponding Perron-Frobenius eigenvectors at each level of $\BB_\Lambda$.
Then the induced unique probability measure $\mu$ on $\partial \BB_\Lambda$ is given by
\begin{equation}\label{eq:measure_kBD}
\mu[\eta]=\Big(\frac{1}{\rho^{q}\rho_1\dots \rho_{t}}\Big) \kappa^\Lambda_{s(\eta)},
\end{equation}
where $\rho=\rho_1\dots \rho_k$ and $\eta$ is a finite path in $\BB_\Lambda$ with $r(\eta)\in {V}_0$
and $\vert \eta\vert=qk+t$ for some $q,t\in \N$ and $0\le t\le k-1$.
\end{rmk}

\begin{example}\label{ex1}
Here we consider two examples of $2$-graphs with one vertex. The first one is a $2$-graph with one vertex, two blue edges and two red edges and the second one is a $2$-graph with one vertex, two blue edges and one red edge.

First consider a $2$-colored graph with one vertex $v$, two blue edges $e_1, e_2$ and two red edges $f_1,f_2$:

\[
\begin{tikzpicture}[scale=1.7]
 \node[inner sep=0.5pt, circle] (v) at (0,0) {$v$};
\draw[-latex, blue, thick] (v) edge [out=140, in=190, loop, min distance=15, looseness=2.5] (v);
\draw[-latex, blue, thick] (v) edge [out=120, in=210, loop, min distance=40, looseness=2.5] (v);
\draw[-latex, red, thick, dashed] (v) edge [out=-20, in=30, loop, min distance=15, looseness=2.5] (v);
\draw[-latex, red, thick, dashed] (v) edge [out=-40, in=50, loop, min distance=40, looseness=2.5] (v);
\node at (-0.6, 0.1) {$e_1$}; \node at (-1,0.3) {$e_2$}; \node at (0.6,0.1) {$f_1$}; \node at (1, -0.1) {$f_2$};
\end{tikzpicture}
\]
Then there is a $2$-graph $\Lambda$ with the above skeleton and  factorization rules given by
\begin{equation}\label{eq:ex_facto}
e_1f_1=f_1e_1\quad e_2f_2=f_2e_2, \quad e_1f_2=f_1e_2,\quad e_2f_1=f_2e_1.
\end{equation}
Then the associated vertex matrices are $A_1=(2)$ and $A_2=(2)$ and the associated stationary Bratteli diagram $\mathcal{B}_\Lambda$ is given by
\[
\begin{tikzpicture}[scale=1.5]
\node[inner sep=0.5pt, circle] (v0) at (0,1) {$v^0$};
\node[inner sep=0.5pt, circle] (v1) at (1.5,1) {$v^1$};
\node[inner sep=0.5pt, circle] (v2) at (3,1) {$v^2$};
\node[inner sep=0.5pt, circle] (v3) at (4.5,1) {$v^3$};
\node[inner sep=0.5pt, circle] (v4) at (6,1) {$v^4$};
\draw[-latex, blue, thick] (v1) edge [out=135, in=45] (v0);
\draw[-latex, blue, thick] (v1) edge [out=225, in=-45] (v0);
\draw[-latex, red, thick] (v2) edge [out=135, in=45] (v1);
\draw[-latex, red, thick] (v2) edge [out=225, in=-45] (v1);
\draw[-latex, blue, thick] (v3) edge [out=135, in=45] (v2);
\draw[-latex, blue, thick] (v3) edge [out=225, in=-45] (v2);
\draw[-latex, red, thick] (v4) edge [out=135, in=45] (v3);
\draw[-latex, red, thick] (v4) edge [out=225, in=-45] (v3);

\node at (6.2, 1) {$.$}; \node at (6.3,1) {$.$}; \node at (6.4, 1) {$.$}; \node at (6.5,1) {$.$};
\end{tikzpicture}
\]
Since the spectral radii of both $A_1$ and $A_2$ are $2$, Proposition~2.17 of \cite{FGJKP2} implies that the infinite path space $\partial \mathcal{B}_\Lambda$ is a Cantor set. According to \cite{YD-per}, $\Lambda$ is periodic with minimal period $(1,-1)$ and the corresponding $C^*$-algebra can be identified with $C(\T)\otimes  \mathfrak{A}$, where $\mathfrak{A}$ is simple. (See the details in Theorem~3.1 and Section~5 of \cite{YD-per}).

The Perron-Frobenius eigenvector at each level of $\mathcal{B}_\Lambda$ is given by $(\kappa_v)_{v\in \Lambda^0}=1$ since there is only one vertex $v$ in $\Lambda$. Also note that any finite path $\gamma$ in the Bratteli diagram $\mathcal{B}_\Lambda$ can be represented as
\[\begin{split}
&\gamma=e_{i_1}f_{i_2} e_{i_3}f_{i_4}\dots e_{i_n}, \;\;  \quad \text{or}\;\;\quad \gamma=e_{i_1}f_{i_2} e_{i_3}f_{i_4}\dots e_{i_{m-1}}f_{i_m},\\
\end{split}\]
where $i_k\in \{1,2\}$.
The self-similar measure $\mu$ on $\partial \mathcal{B}_\Lambda$ is given by
\[
\mu([\gamma])=\Big(\frac{1}{2}\Big)^{n_1} \cdot \Big(\frac{1}{2}\Big)^{n_2} \cdot  1=\Big(\frac{1}{2}\Big)^{(n_1+n_2)},
\]
where $\gamma$ is a finite path of $\mathcal{B}_\Lambda$ with $n_1$ blue edges and $n_2$ red edges.

Now let $\Lambda'$ be the $2$-graph with one vertex $v$, two blue edges $e_1, e_2$ and one red edge $f$ that satisfies the factorization property:
\[
e_1f=fe_2 \quad \text{and} \quad fe_1=e_2f.
\]
Then the associated vertex matrices are $A_1=(2)$ and $A_2=(1)$ and the associated stationary Bratteli diagram $\mathcal{B}_{\Lambda'}$ is given by

\[
\begin{tikzpicture}[scale=1.5]
\node[inner sep=0.5pt, circle] (v0) at (0,1) {$v^0$};
\node[inner sep=0.5pt, circle] (v1) at (1.5,1) {$v^1$};
\node[inner sep=0.5pt, circle] (v2) at (3,1) {$v^2$};
\node[inner sep=0.5pt, circle] (v3) at (4.5,1) {$v^3$};
\node[inner sep=0.5pt, circle] (v4) at (6,1) {$v^4$};
\draw[-latex, blue, thick] (v1) edge [out=135, in=45] (v0);
\draw[-latex, blue, thick] (v1) edge [out=225, in=-45] (v0);
\draw[-latex, red, thick] (v2) edge (v1);
\draw[-latex, blue, thick] (v3) edge [out=135, in=45] (v2);
\draw[-latex, blue, thick] (v3) edge [out=225, in=-45] (v2);
\draw[-latex, red, thick] (v4) edge (v3);

\node at (6.2, 1) {$.$}; \node at (6.3,1) {$.$}; \node at (6.4, 1) {$.$}; \node at (6.5,1) {$.$};
\end{tikzpicture}
\]

Since spectral radii of $A_1$ and $A_2$ are $\rho_1=2$ and $\rho_2=1$, and  $\rho_1\rho_2=2>1$, Proposition~2.17 of \cite{FGJKP2} implies that the infinite path space $\partial \mathcal{B}_{\Lambda'}$ is a Cantor set. As described in Example~4.10 of \cite{FGJKP-monic}, $\partial \mathcal{B}_{\Lambda'}$ is homeomorphic to $\prod_{i=1}^\infty \Z_2$. Moreover, there is a $\Lambda'$-semibranching function system associated to this $2$-graph which gives rise to a monic representation of $C^*(\Lambda')$. (See the details in Example~3.6 of \cite{FGJKP-SBFS} and Example~4.10 of \cite{FGJKP-monic}).

 The self-similar measure $\mu$ on $\partial \mathcal{B}_{\Lambda'}$ is given by
\[
\mu([\eta])=\Big(\frac{1}{2}\Big)^{m_1} \cdot \Big(1)^{m_2} \cdot  1=\Big(\frac{1}{2}\Big)^{m_1},
\]
where $\eta$ is a finite path of $\mathcal{B}_\Lambda$ with $m_1$ blue edges and $m_2$ red edges.
\qed

\end{example}

\subsection{Weights and ultrametrics}

For a Bratteli diagram (or stationary $k$-Bratteli diagram) $\BB$ and $n\in\N$, we let
\[
F^n\BB=\{\eta\in \BB : r(\eta)\in V_0,\;\; |\eta|=n\},
\]
where $|\eta|$ is the length of $\eta$, and let $F\BB=\cup_{n\in \N} F^n\BB$.

\begin{defn}\cite{PB, JS, FGJKP2}
\label{def:weight}
 A \emph{weight} on $\mathcal{B}$ is a function $w: F\mathcal{B} \to \R^+$ such that
\begin{itemize}
\item[(i)] for any vertex $v \in {V}_0, \ w(v) <1$;
\item[(ii)] $\lim_{n\to \infty} \sup \{ w(\lambda): \lambda \in F^n\mathcal{B} \} = 0$;
\item[(iii)] if $\eta$ is a sub-path of $\lambda$, then $w(\lambda) \le w(\eta)$.
\end{itemize}
\end{defn}

For $x,y\in \partial \BB$, we write $x\wedge y$ for the longest common initial sub-path of $x$ and $y$.
If $r(x)\ne r(y)$, then we define $x\wedge y=\emptyset$.

We say that a metric $d$ on a Cantor set $\mathcal{C}$ is an \emph{ultrametric}
if $d$ induces the Cantor set topology and  satisfies
\[
d(x,y ) \leq \max\{ d(x,z),d(y,z)\} \text{ for all } x,y,z \in \mathcal{C}.
\]
It is not hard to see that a weight $w$ on a Bratteli diagram gives an ultrametric $d_w$ as follows.

\begin{prop}\cite[Proposition~2.15]{FGJKP2} (c.f. \cite{PB, JS, FGJKP1})
\label{pr:weight-ultrametric}
Let $\mathcal{B}$ be a Bratteli diagram with a weight $w$.  Suppose that $\partial{\mathcal{B}}$ is a Cantor set.
The function $d_w: \partial \mathcal{B} \times \partial {\mathcal{B}} \to \R^+$ given by
\begin{equation}\label{eq:ultrametric}
d_w(x,y) = \begin{cases}
1 & \text{if}\;\; x \wedge y = \emptyset \\
0 & \text{if}\;\; x=y \\
w(x\wedge y) & \text{otherwise}
\end{cases}
\end{equation}
is an ultrametric on $\partial{\mathcal{B}}$.
Moreover, $d_w$ metrizes the cylinder set topology on $\partial \mathcal{B}$.
\end{prop}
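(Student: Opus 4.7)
The proof splits into two tasks: verifying the ultrametric axioms for $d_w$, and identifying the induced topology with the cylinder-set topology. Symmetry of $d_w$ follows from the symmetry of the common-prefix operation $x \wedge y$, and the three cases in the definition, together with axioms (i)--(ii) of the weight in Definition~\ref{def:weight}, ensure that $d_w$ is a well-defined non-negative function vanishing exactly on the diagonal (using that $w$ decays to $0$ along any infinite path and is bounded by $1$ on $V_0$).

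The core of the ultrametric claim is the strong triangle inequality $d_w(x,y) \le \max\{d_w(x,z),d_w(y,z)\}$. Set $\alpha = x \wedge z$ and $\beta = y \wedge z$ and, by symmetry, assume $|\alpha| \le |\beta|$. Then $z$ agrees with $x$ on its first $|\alpha|$ edges and with $y$ on its first $|\beta| \ge |\alpha|$ edges, so $x$ and $y$ both coincide with $z$ on the initial segment of length $|\alpha|$, and hence $\alpha$ is a sub-path of $x \wedge y$. Monotonicity axiom (iii) of the weight then gives $w(x \wedge y) \le w(\alpha) = d_w(x,z)$, which is exactly the desired bound. The degenerate situations in which a meet is empty (i.e.\ two of the three points have distinct initial vertices in $V_0$) are handled by the same comparison, using that $1$ is an upper bound for every non-empty value of $w$.

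For the topology, I would show that the metric balls and the cylinder sets mutually refine each other. Fix $x = (x_n)_{n \ge 1} \in \partial\BB$, write $\eta_n = x_1 x_2 \cdots x_n$ for its initial segments, and note that by axiom (iii) the sequence $w(\eta_n)$ is non-increasing while by axiom (ii) it tends to $0$. For any $y \in [\eta_N]$ with $y \ne x$ one has $x \wedge y = \eta_m$ for some $m \ge N$, so $d_w(x,y) = w(\eta_m) \le w(\eta_N)$; hence $[\eta_N]$ is contained in the open ball of radius $r$ around $x$ whenever $w(\eta_N) < r$. Conversely, if $d_w(x,y) < r \le w(\eta_{N-1})$ and $y \ne x$, then $w(x \wedge y) < w(\eta_{N-1})$ together with monotonicity forces $x \wedge y$ to be strictly longer than $\eta_{N-1}$, so $y \in [\eta_N]$. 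These two inclusions exhibit the cylinders $[\eta_n]$ as a neighborhood base at $x$ in the $d_w$-topology, which yields metrization.

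The strong triangle inequality is essentially a one-line comparison of initial segments, so I expect the real (if modest) obstacle to be the topology step: one must combine monotonicity (axiom (iii)) with decay (axiom (ii)) simultaneously, and carefully keep track of when $w(\eta_n)$ strictly drops, in order to translate ``$d_w(x,y)$ small'' into ``$x$ and $y$ share a long initial sub-path,'' and vice versa.
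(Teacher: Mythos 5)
The paper itself contains no proof of this proposition: it is imported verbatim from \cite[Proposition~2.15]{FGJKP2} and the text explicitly defers to that reference for details, so there is no internal argument to compare yours against. Your proof is the standard one and is essentially correct: the strong triangle inequality by comparing the lengths of $x\wedge z$ and $y\wedge z$ and invoking monotonicity (axiom (iii) of Definition~\ref{def:weight}), and the metrization step by showing cylinders and $d_w$-balls mutually refine one another using decay (ii) together with (iii); both halves, including the degenerate cases where a meet is empty, check out. The one point you should address explicitly is that the first two cases of \eqref{eq:ultrametric} as printed are interchanged: read literally, the formula assigns $d_w(x,x)=1$ and $d_w(x,y)=0$ whenever $r(x)\ne r(y)$, which is not a metric at all, so your opening assertion that the displayed definition ``vanishes exactly on the diagonal'' is false for the formula as written. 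You are in fact proving the intended (and correct) statement, with $d_w(x,y)=1$ when $x\wedge y=\emptyset$ and $d_w(x,x)=0$ --- this is also what your triangle-inequality and topology arguments tacitly use, e.g.\ when you take $1$ as an upper bound for all nonempty values of $w$ --- but the correction of the typo should be stated rather than made silently. A very minor further remark: in the ball-inside-cylinder direction your choice $r\le w(\eta_{N-1})$ presupposes $N\ge 1$; the case $N=0$ (the cylinder over a root vertex) needs the separate observation that $d_w(x,y)<1$ already forces $r(y)=r(x)$.
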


It is shown in \cite{FGJKP2} that we can associate a $k$-graph to an ultrametric Cantor set under a mild hypothesis as follows.
Let $\Lambda$ be a finite strongly connected $k$-graph with vertex matrices $A_i$
and suppose that each spectral radius $\rho_i$ of $A_i$ is bigger than $1$.
Then Proposition~2.17 of \cite{FGJKP2} implies that  the infinite path space $\Lambda^\infty$ is a Cantor set,
and hence the infinite path space $\partial \mathcal{B}_\Lambda$
of the corresponding stationary $k$-Bratteli diagram $\mathcal{B}_\Lambda$ is also a Cantor set
since $\Lambda^\infty$  is homeomorphic to $\partial \mathcal{B}_\Lambda$.
Also, Proposition~2.19 of \cite{FGJKP2} implies that there exists a weight $\omega_\delta$
on $\mathcal{B}_\Lambda$ for $\delta\in (0,1)$ given as follows:
For any $\lambda\in F\mathcal{B}$ with $|\lambda|=n\in \N$, we write $n = qk + t$ for some $q, t \in \N_0$
with $0 \leq t \leq k-1$.  For each $\delta \in (0,1),$ we define $w_\delta: F\mathcal{B}\to \R^+$ by
\begin{equation}\label{eq:weight-k}
 w_\delta(\eta) = \left(\rho_1^{q +1} \cdots \rho_t^{q+1}
 \rho_{t+1}^q \cdots \rho_k^q \right)^{-1/\delta} \kappa^\Lambda_{s(\eta)},
\end{equation}
where $\kappa^\Lambda$ is the unimodular Perron-Frobenius eigenvector for $\Lambda$.
Moreover, Proposition~\ref{pr:weight-ultrametric} above implies
that the weight $\omega_\delta$ induces an ultrametric $d_{\omega_\delta}$ on $\partial{\mathcal{B}}$
and the metric topology induced by $d_{w_\delta}$ agrees with the cylinder set topology of $\partial \BB_\Lambda$.
Thus if we let $B_{d_{w_\delta}}(x,r)$ be the closed ball of center $x$ and radius $r>0$,
then $B_{d_{w_\delta}}(x, r)=[x_1\dots x_n]$ for some $n\in \N$, where $x=(x_n)_{n=1}^\infty \in \partial \BB_\Lambda$.
(See the details of the proof in Proposition~2.15 of \cite{FGJKP2}).

\begin{example}\label{ex1-1}

For the $2$-graph $\Lambda$ given in Example~\ref{ex1}, there is a weight $w_\delta$ on $\mathcal{B}_\Lambda$ given by
\[
w_\delta(\gamma)=2^{-\frac{n_1+n_2}{\delta}}
\]
where $\delta\in (0,1)$ and $\gamma$ is a finite path of $\mathcal{B}_\Lambda$ with $n_1$ blue edges and $n_2$ red edges.

Similarly for the $2$-graph $\Lambda'$ given in Example~\ref{ex1}, there is a weight $w'_\delta$ on $\mathcal{B}_{\Lambda'}$ given by
\[
w'_\delta(\eta)=2^{-\frac{m_1}{\delta}}
\]
where $\delta\in (0,1)$ and $\eta$ is a finite path of $\mathcal{B}_{\Lambda'}$ with $m_1$ blue edges and $m_2$ red edges.

\qed
\end{example}

\section{Spectral triples and Laplace-Beltrami operators}
\label{sec:Dirichlet form}

For the rest of this paper, we denote by $\Lambda$ and $\mathcal{B}_\Lambda$
a finite, strongly connected $k$-graph and its associated stationary $k$-Bratteli diagram
with infinite path space $\PB_\Lambda$, respectively, unless specified otherwise.

According to \cite{FGJKP2}, there exists a family of spectral triples
for the ultrametric Cantor set  associated to  $\Lambda$ under mild hypotheses.
The associated $\zeta$-function  is also regular and the measure $\mu$ on $\partial\BB_\Lambda$
induced by the Dixmier trace turned out to be equivalent to the Perron-Frobenius measure $M$ on $\Lambda^\infty$.
Moreover, the family of spectral triples induces Laplace-Beltrami operators $\Delta_s$, $s\in \R$ given via the Dirichlet forms as below.
In this section, we first review them, and then investigate the asymptotic behavior
of the eigenvalues of $\Delta_s$ and the associated ultrametric $d^{(s)}$  called the intrinsic metric.

\begin{defn}\cite{PB, JS, FGJKP2}
\label{def:spectral-triple}
An \textit{odd spectral triple} is a triple $(\mathcal{A}, \mathcal{H}, D)$
\footnote{Note that the representation $\pi$ is often included in the notation for a spectral triple.}
consisting of a Hilbert space $\mathcal{H}$,
an involutive algebra $\mathcal{A}$ of (bounded) operators on $\mathcal{H}$
and a densely defined self-adjoint operator $D$ that has compact resolvent
such that $[D,\pi(a)]$ is a bounded operator for all $a\in \mathcal{A}$,
where $\pi$ is a faithful bounded $\ast$-representation of $\mathcal{A}$ on $\mathcal{H}$.
An \textit{even spectral triple} is an odd spectral triple with a grading operator
(meaning self adjoint and unitary) $\Gamma$ on $\mathcal{H}$
such that $\Gamma D=-D\Gamma$, and $\Gamma \pi(a)=\pi(a) \Gamma$ for all $a\in \mathcal{A}$.
\end{defn}

We first suppose that $\partial \BB_\Lambda$ is a Cantor set.
For any $\delta\in (0,1)$, let $w_\delta$ be the weight on $\mathcal{B}_\Lambda$ given in \eqref{eq:weight-k}
and we denote by $d_{w_\delta}$ the induced ultrametric on $\partial\mathcal{B}_\Lambda$ as mentioned in the previous section.
Let $F\BB_\Lambda$ be the set of finite paths in $\BB_\Lambda$ whose ranges are in the first vertex set $V_0$ of $\BB_\Lambda$
(as in the beginning of section 2.2).
Suppose, in addition, that for $\lambda\in F\BB_\Lambda$, we have
\begin{equation}\label{eq:diam=w}
\operatorname{diam}[\lambda]=w_\delta(\lambda),
\end{equation}
where $\operatorname{diam}[\lambda]=\operatorname{sup}\{d_{w_\delta}(x,y)\mid x,y\in [\lambda]\}$.
Then as in \cite{FGJKP2} (c.f. \cite{PB, JS}), we can construct a family of spectral triples
for the ultrametric Cantor set $(\partial \mathcal{B}_\Lambda, d_{w_\delta})$  as follows.
\footnote{The construction of the spectral triple for $(\partial \BB_\Lambda, d_{w_\delta})$
works for a  ultrametric Cantor set induced by any weighted tree as in \cite{PB}.
Also note that the family of the spectral triples is indexed by the space of choice functions.}

A \textit{choice function} for $(\PB_\Lambda, d_{w_\delta})$ is a map $\phi:F\BB_\Lambda\to \PB_\Lambda\times \PB_\Lambda$
such that $\phi(\lambda)=(\phi_1(\lambda),\phi_2(\lambda))\in [\lambda]\times[\lambda]$ and
\begin{equation}\label{eq:choice-ft-metric}
d_{w_\delta}(\phi_1(\lambda),\phi_2(\lambda))=\operatorname{diam}[\lambda].
\end{equation}
(Hence we have $d_{w_\delta}(\phi_1(\lambda),\phi_2(\lambda))=\operatorname{diam}[\lambda]=w_\delta(\lambda)$ by \eqref{eq:diam=w}).
Here $\phi_1(\lambda)$ and $\phi_2(\lambda)$ are infinite paths in $[\lambda]$
and the subscripts 1,2  imply that the choice function gives
a pair of (distinct) infinite paths in $[\lambda]$ satisfying \eqref{eq:choice-ft-metric}.
The condition in \eqref{eq:choice-ft-metric} means that $\phi_1(\lambda)$ and $\phi_2(\lambda)$
satisfy $\phi_1(\lambda)\in [\lambda e]$ and $\phi_2(\lambda)\in [\lambda e']$ for two different edges $e,e'$.
According to \cite{PB}, the space of choice functions is the analogue of the sphere bundle of a Riemannian manifold.

We denote by $\Xi$ the space of choice functions for $(\PB_\Lambda, d_{w_\delta})$.
Since $\PB_\Lambda$ is a Cantor set, $\Xi$ is nonempty
since it implies that for every finite path $\lambda$ of $\BB_\Lambda$
we can find two distinct infinite paths $x,y\in[\lambda]$
such that $\phi_1(\lambda)=x$ and $\phi_2(\lambda)=y$. (See Proposition~2.4 of \cite{FGJKP2}).

Let $C_{\operatorname{Lip}}(\PB_\Lambda)$ be the pre-$C^*$-algebra
of Lipschitz continuous functions on $(\PB_\Lambda, d_{w_\delta})$
and let $\mathcal{H}=\ell^2(F\BB_\Lambda)\otimes \C^2$.
For $\phi\in \Xi$, we define a $\ast$-representation $\pi_\phi$
of $C_{\operatorname{Lip}}(\PB_\Lambda)$ on $\mathcal{H}$ by
\[
\pi_\phi(g) = \bigoplus_{\lambda\in F\BB_\Lambda}
\begin{pmatrix} g(\phi_1(\lambda)) & 0 \\ 0 & g(\phi_2(\lambda)) \end{pmatrix}.
\]
Then we define a \textit{Dirac-type operator} $D$ on $\mathcal{H}$ by
\[
D=\bigoplus_{\lambda\in F\BB_\Lambda} \frac{1}{w_\delta(\lambda)}\begin{pmatrix} 0 & 1 \\ 1 & 0 \end{pmatrix}.
\]
The \textit{grading operator} $\Gamma$ is given by
\[
\Gamma=1_{\ell^2(F\BB_\Lambda)}\otimes \begin{pmatrix} 1 & 0 \\ 0 & -1 \end{pmatrix}.
\]
Then one can show that $\pi_\phi$ is a faithful $\ast$-representation,
and the unbounded operator $D$ is self-adjoint with compact resolvent
and the commutator $[D,\pi_{\phi}(g)]$ is a bounded operator for all $g\in C_{\operatorname{Lip}}(\PB_\Lambda)$.
Moreover, one can check that
\[
[\Gamma, \pi_\phi(g)]=0 \quad \text{for all $g\in C_{\operatorname{Lip}}(\PB_\Lambda)$},
\]
and $\Gamma D=-D\Gamma$. Hence, we obtain an even spectral triple
$(C_{\operatorname{Lip}}(\PB_\Lambda), \pi_\phi, \mathcal{H}, D, \Gamma)$
for each choice function $\phi\in \Xi$. (See the details in \cite{PB, JS, FGJKP2}).

As defined in \cite{FGJKP2}, the associated $\zeta_\delta$-function is given by
\[
\zeta_\delta(s)=\frac{1}{2}\operatorname{Tr}(|D|^{-s})
\]
where $s\in \C$. Since we have assumed the equation \eqref{eq:diam=w},
the abscissa $s_0$ of convergence of the $\zeta_\delta$-function is $\delta$ by \cite[Theorem~3.8]{FGJKP2}.
Moreover, the associated Dixmier trace induces a measure $\mu_{w_\delta}$ on $\PB_\Lambda$
by the formula
\[
\mu_{w_\delta}[\gamma]=\mu_{w_\delta}(\chi_{\gamma})
=\lim_{s\to s_0}\frac{\operatorname{Tr}(\vert D\vert^{-s} \pi_{\phi}(\chi_\gamma))}{\operatorname{Tr}(\vert D\vert^{-s})}
\]
where $\chi_\gamma$ is the characteristic function on a cylinder set $[\gamma]$.

According to Corollary~3.10 of \cite{FGJKP2},
the induced Dixmier trace measure $\mu_{w_\delta}$ is finite and independent of $\delta$ under a mild hypothesis
\footnote{The product of the vertex matrices $A=A_1\dots A_k$ is irreducible}, so we  denote by $\mu$ without subscript.
Moreover, the measure $\mu$ agrees with the probability measure on $\PB_\Lambda$ given in \eqref{eq:measure_kBD}.
Hence, for $\lambda\in F\BB_\Lambda$ with $|\lambda|=qk+t$ $(0\le t \le k-1)$, $\mu$ is given by
\[
\mu[\lambda]=(\rho_1\dots \rho_t)^{-(q+1)} (\rho_{t+1}\dots \rho_k)^{-q}\kappa_{s(\lambda)}.
\]
(See Theorem~3.9 and Corollary~3.10 of \cite{FGJKP2} for the details).

Now we describe the associated Dirichlet form and Laplace-Beltrami operator as follows.
Recall that $F\BB_\Lambda$ is the set of finite paths in $\BB_\Lambda$ whose ranges are in the first vertex set $V_0$ of $\BB_\Lambda$.
Let $\mu$ be the induced measure on $\PB_\Lambda$ as above and let $\chi_\lambda$ be the characteristic function of the cylinder set $[\lambda]$ for $\lambda\in F\mathcal{B}_\Lambda$.
For each $s\in \R$, we define a sesquilinear form $Q_s$ on the dense subspace
$\operatorname{Dom}(Q_s):= \text{span}\{\chi_{\lambda}: \lambda \in F\BB_\Lambda\}$
of $L^2(\PB_\Lambda, \mu)$ by
\begin{equation}\label{eq:Dirichlet-spectral}
Q_s(f,g)=\frac{1}{2} \int_{\Xi} \operatorname{Tr}\big(\vert D\vert^{-s} [D,\pi_{\phi}(f)]^{\ast} [D,\pi_\phi(g)] \big) \, d\nu(\phi)
\end{equation}
for $f,g \in \operatorname{Dom}(Q_s)$,
where $\nu$ is the normalized measure on the set $\Xi$ of choice functions given by the measure $\mu$.
In particular, we have that for $\lambda\in F\BB_\Lambda$,
\begin{equation}\label{eq:measure_nu}
\nu_\lambda=\frac{\mu\times \mu}{\sum_{(e,e')\in \operatorname{ext}_1(\lambda)} \mu[\lambda e]~\mu[\lambda e']}
\end{equation}
where $\operatorname{ext}_1(\lambda)$ is the set of ordered pairs of distinct edges $(e,e')$
which extend $\gamma$ one generation further, i.e., $e\ne e'$, $r(e)=r(e')=s(\lambda)$, and $|e|=|e'|=1$.

Using the similar argument given in Section~8 of \cite{PB}, one can verify that the formula of $Q_s$ given in \eqref{eq:Dirichlet-spectral} gives rise to a closable Dirichlet form on $L^2(\partial \mathcal{B}_\Lambda, \mu)$ as follows.
 Note first that for $\lambda\in F\mathcal{B}_\Lambda$,
\[
\pi_{\psi}(\chi_\lambda)=\bigoplus_{\gamma\in F\mathcal{B}_\Lambda}\begin{pmatrix} \chi_\lambda(\psi_1(\gamma)) & 0 \\ 0 & \chi_\lambda(\psi_2(\gamma)) \end{pmatrix}=\pi_\psi(\chi_\lambda)_1 \oplus \pi_\psi(\chi_\lambda)_2 \oplus \pi_\psi(\chi_\lambda)_3,
\]
where
\[\pi_\psi(\chi_\lambda)_1 =\bigoplus_{\substack{\gamma\in F\mathcal{B}_\Lambda\\ [\lambda]\subseteq [\gamma]}}\begin{pmatrix} \chi_\lambda(\psi_1(\gamma)) & 0 \\ 0 & \chi_\lambda(\psi_2(\gamma)) \end{pmatrix},  \quad \pi_\psi(\chi_\lambda)_2=  \bigoplus_{\substack{\gamma\in F\mathcal{B}_\Lambda \\ [\gamma]\varsubsetneq [\lambda]}}\begin{pmatrix} \chi_\lambda(\psi_1(\gamma)) & 0 \\ 0 & \chi_\lambda(\psi_2(\gamma)) \end{pmatrix},
\]
\[
\pi_\psi(\chi_\lambda)_3 =\bigoplus_{\substack{\gamma\in F\mathcal{B}_\Lambda \\ \text{$[\lambda]\nsubseteq [\gamma]$ and $[\gamma]\nsubseteq [\lambda]$}}}
\begin{pmatrix} \chi_\lambda(\psi_1(\gamma)) & 0 \\ 0 & \chi_\lambda(\psi_2(\gamma)) \end{pmatrix}.
\]
Also note that
\[
D=\bigoplus_{\gamma\in F\mathcal{B}_\Lambda} \frac{1}{w_\delta(\gamma)}\begin{pmatrix} 0 & 1 \\ 1 & 0 \end{pmatrix}=D_1\oplus D_2 \oplus D_3,
\]
where
\[
D_1=\bigoplus_{\substack{\gamma\in F\mathcal{B}_\Lambda \\ [\lambda]\subseteq [\gamma]}} \frac{1}{w_\delta(\gamma)}\begin{pmatrix} 0 & 1 \\ 1 & 0 \end{pmatrix}, \quad D_2=\bigoplus_{\substack{\gamma\in F\mathcal{B}_\Lambda \\ [\gamma]\varsubsetneq [\lambda]}} \frac{1}{w_\delta(\gamma)}\begin{pmatrix} 0 & 1 \\ 1 & 0 \end{pmatrix}, \quad D_3=\bigoplus_{\substack{\gamma\in F\mathcal{B}_\Lambda \\ {\text{$[\lambda]\nsubseteq [\gamma]$ and $[\gamma]\nsubseteq [\lambda]$}}}} \frac{1}{w_\delta(\gamma)}\begin{pmatrix} 0 & 1 \\ 1 & 0 \end{pmatrix}.
\]
Then it is straightforward to see that $[D_3, \pi_\psi(\chi_\lambda)_3]=0$. Also $D_2\pi_\psi(\chi_\lambda)_2=\pi_\psi(\chi_\lambda)_2 D_2$, and  hence $[D_2, \pi_\psi(\chi_\lambda)_2]=0$. Thus
\[\begin{split}
[D,\pi_\psi (\chi_\lambda)] &=[D_1, \pi_\psi(\chi_\lambda)_1]\oplus [D_2, \pi_\psi(\chi_\lambda)_2] \oplus [D_3, \pi_\psi(\chi_\lambda)_3]\\
&=[D_1, \pi_\psi(\chi_\lambda)_1]\oplus 0 \oplus 0,
\end{split}
\]
which is finite rank. This implies that $Q_s$ given in \eqref{eq:Dirichlet-spectral} is valid for all $f,g\in \operatorname{Dom}(Q_s)$.
Now the similar arguments given in the proof of Theorem~4 of \cite[Section~8]{PB} shows that $Q_s$ is a closable Dirichlet form for all $s\in \R$ with dense domain $\operatorname{Dom}(Q_s)$.
Then Theorem~7 of \cite{PB} implies that there exists a non-positive definite self-adjoint operator $\Delta_s$ such that
\[
\langle -\Delta_s f, g \rangle=Q_s(f,g),
\]
and that $\Delta_s$ generates a Markovian semigroup. (See details in Section~8.3 of \cite{PB}).

According to \cite[Section~4.1]{JS}, one can compute the formula for $\Delta_s (\chi_\gamma)$ as follows,
and hence one can obtain the eigenvalues and the corresponding eigenvectors explicitly.
Fix a finite path $\gamma\in F \BB_\Lambda$ with $|\gamma|=n$.
For $0 \le k\le n=|\gamma|$, we write $\gamma_k=\gamma(0,k)$ for the initial sub-path of $\gamma$ with length $k$.
Then we have
\[
\Delta_s (\chi_\gamma)= -\sum_{k=0}^{n-1} \frac{1}{G_s(\gamma_k)}
\big( (\mu[\gamma_k]-\mu[\gamma_{k+1}])\chi_\gamma - \mu[\gamma](\chi_{\gamma_k}-\chi_{\gamma_{k+1}})\big),
\]
where
\[
G_s(\eta)=\frac{1}{2} w_\delta(\eta)^{2-s} \sum_{(e,e')\in \operatorname{ext}_1(\eta)} \mu[\eta e]\, \mu[\eta e']
\]
as in (4.2a), (4.2b) of \cite{JS}.
Moreover, Theorem~4.3 of \cite{JS} implies that we can show that $\Delta_s$ has a pure point spectrum,
and one can compute the eigenvalues and eigenspaces for $\Delta_s$ explicitly as follows.
 In particular, we note that the values of $\lambda_{s,\gamma}$ are non-positive and
compute the eigenvalues $\lambda_{s,\gamma}$ such that $\Delta_s \chi_\gamma = \lambda_{s,\gamma} \chi_\gamma$.
For $\gamma\in F\BB_\Lambda$,
\begin{equation}\label{eq:ev1}
\lambda_{s,\gamma}=\sum_{k=0}^{n-1}\frac{\mu[\gamma_{k+1}]-\mu[\gamma_k]}{G_s(\gamma_k)}-\frac{\mu[\gamma]}{G_s(\gamma)}
\end{equation}
is an eigenvalue with the eigenspace
\begin{equation}\label{eq:eigenspace}
E_{s,\gamma}=\operatorname{span}\Big\{\frac{\chi_{\gamma e}}{\mu[\gamma e]}
-\frac{\chi_{\gamma e'}}{\mu[\gamma e']}\,:\, (e,e')\in \operatorname{ext}_1(\gamma)\Big\}.
\end{equation}

We investigate for which values of $s$ the eigenvalue $\lambda_{s,\gamma}$ goes to $-\infty$ as $|\gamma|\to \infty$.
Note that this behavior has not been investigated before.

\begin{prop}\label{prop:eigenvalue}
For any $\delta\in (0,1)$, let $w_\delta$ be the weight given in \eqref{eq:weight-k}
and $d_{w_\delta}$ be the associated ultrametric given in Proposition~\ref{pr:weight-ultrametric} on $\PB_\Lambda$.
Let $\mu$ be the probability measure on $\partial \BB_\Lambda$ given in \eqref{eq:measure_kBD}.
Let $\{\rho_i: 1\le i\le k\}$ be the set of the spectral radii of all vertex matrices of $\Lambda$,
and suppose that $\rho_i>1$ for all $1\le i\le k$.
Let $\Delta_s, s\in \R$ be the Laplace-Beltrami operator associated to the Dirichlet form $Q_s$ given in \eqref{eq:Dirichlet-spectral}
and let $\lambda_{s,\gamma}$ be its eigenvalues given in \eqref{eq:ev1}, where $\gamma\in F\mathcal{B}_\Lambda$.
If $s<2+\delta$, then  for $\gamma\in F\mathcal{B}_\Lambda$,
the eigenvalue $\lambda_{s,\gamma}$ goes to $-\infty$ as $|\gamma|\to \infty$.
\end{prop}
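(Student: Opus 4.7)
The plan is to isolate a single dominant negative term in the expression \eqref{eq:ev1} for $\lambda_{s,\gamma}$. Because $[\gamma_{k+1}]\subseteq [\gamma_k]$, we have $\mu[\gamma_{k+1}]\le\mu[\gamma_k]$, so every summand $(\mu[\gamma_{k+1}]-\mu[\gamma_k])/G_s(\gamma_k)$ in \eqref{eq:ev1} is non-positive. Hence
\[
-\lambda_{s,\gamma}\;\ge\;\frac{\mu[\gamma]}{G_s(\gamma)},
\]
and it therefore suffices to prove that the right-hand side tends to $+\infty$ as $|\gamma|\to\infty$.

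Comparing the formulas \eqref{eq:measure_kBD} and \eqref{eq:weight-k} for $\mu[\gamma]$ and $w_\delta(\gamma)$, one reads off the identity
\[
w_\delta(\gamma)\;=\;\mu[\gamma]^{1/\delta}\,(\kappa^\Lambda_{s(\gamma)})^{\,1-1/\delta}.
\]
Since the Perron--Frobenius eigenvector $\kappa^\Lambda$ has only finitely many positive entries, this gives $w_\delta(\gamma)^{2-s}\asymp \mu[\gamma]^{(2-s)/\delta}$ with multiplicative constants uniform in $\gamma$. Combining this with the trivial bound
\[
\sum_{(e,e')\in\operatorname{ext}_1(\gamma)}\mu[\gamma e]\,\mu[\gamma e']\;\le\;\Bigl(\sum_e\mu[\gamma e]\Bigr)^{\!2}\;=\;\mu[\gamma]^2,
\]
where the equality uses the disjoint-union decomposition $[\gamma]=\bigsqcup_e[\gamma e]$, we obtain
\[
G_s(\gamma)\;\le\;C\,\mu[\gamma]^{(2-s)/\delta+2}
\]
for some constant $C>0$ independent of $\gamma$, and consequently
\[
\frac{\mu[\gamma]}{G_s(\gamma)}\;\ge\;C^{-1}\,\mu[\gamma]^{(s-2-\delta)/\delta}.
\]

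Under the hypothesis $s<2+\delta$ (with $\delta\in(0,1)$), the exponent $(s-2-\delta)/\delta$ is strictly negative. Since each $\rho_i>1$, the explicit formula \eqref{eq:measure_kBD} shows that $\mu[\gamma]\to 0$ as $|\gamma|\to\infty$; thus $\mu[\gamma]^{(s-2-\delta)/\delta}\to+\infty$, which forces $-\lambda_{s,\gamma}\to+\infty$ and proves the claim.

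The principal subtlety I anticipate is the careful bookkeeping of the powers of $\kappa^\Lambda_{s(\gamma)}$ in the $w_\delta$-versus-$\mu$ comparison, together with the implicit requirement that $\operatorname{ext}_1(\gamma)$ be nonempty so that $G_s(\gamma)>0$ and \eqref{eq:ev1} is genuinely defined. The latter is ensured by $\PB_\Lambda$ being a Cantor set combined with the standing assumption $\rho_i>1$, which guarantees branching at every vertex of $\BB_\Lambda$. No lower bound on $\sum_{(e,e')}\mu[\gamma e]\mu[\gamma e']$ is actually needed, which is what keeps the argument short.
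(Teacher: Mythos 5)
Your argument is correct, and it reaches the conclusion through the same pivot as the paper's proof --- in the decomposition $\lambda_{s,\gamma}=A-B$ with $B=\mu[\gamma]/G_s(\gamma)$, the hypothesis $s<2+\delta$ is exactly what makes $B\to+\infty$ --- but your execution is genuinely leaner in two respects. Where the paper establishes the non-positivity of the summands of $A$ by a contradiction argument (showing that $\kappa^\Lambda_{s(\gamma_{n+1})}/\rho_{t+1}-\kappa^\Lambda_{s(\gamma_n)}>0$ for all large $n$ would force some $\kappa^\Lambda_v>1$) and then still tracks the asymptotics of $A$ through a three-case analysis on $s$ (namely $s<2$, $s=2$, $2<s<2+\delta$), you get $A\le 0$ for free from monotonicity of the measure and discard it; and your identity $w_\delta(\gamma)=\mu[\gamma]^{1/\delta}(\kappa^\Lambda_{s(\gamma)})^{1-1/\delta}$ combined with $\sum_{(e,e')\in\operatorname{ext}_1(\gamma)}\mu[\gamma e]\,\mu[\gamma e']\le\bigl(\sum_e\mu[\gamma e]\bigr)^2=\mu[\gamma]^2$ collapses all of the exponent bookkeeping into the single uniform bound $B\ge C^{-1}\mu[\gamma]^{(s-2-\delta)/\delta}$, valid for every $s<2+\delta$ at once. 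One can check your exponent against the paper's: since $\mu[\gamma]\asymp(\rho_1^{q+1}\cdots\rho_t^{q+1}\rho_{t+1}^{q}\cdots\rho_k^{q})^{-1}$, your $\mu[\gamma]^{(s-2-\delta)/\delta}$ reproduces the paper's per-generation exponent $1-(s-2)/\delta$ on the $\rho$-product, which is positive precisely when $s<2+\delta$. The issue you flag --- that $\operatorname{ext}_1(\gamma)\neq\emptyset$ is needed for $G_s(\gamma)>0$ and hence for \eqref{eq:ev1} to make sense --- is treated no more carefully in the paper; it is part of the standing hypotheses under which the spectral triple is built (condition \eqref{eq:diam=w} is what guarantees branching). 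Only a phrasing quibble: $\kappa^\Lambda$ has finitely many entries, all positive (not ``finitely many positive entries''), which is what makes $(\kappa^\Lambda_{s(\gamma)})^{(1-1/\delta)(2-s)}$ uniformly bounded above and below.
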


\begin{proof}

Fix a finite path $\gamma$ in $\BB_\Lambda$.
For simplicity, we drop the subscript $s$ from $\lambda_{s,\gamma}$ for the proof.
In order to simplify the computation, we write the formula of $\lambda_\gamma$ in \eqref{eq:ev1} as
\[
\lambda_\gamma=A-B,
\]
where
\[
A=\sum_{n=0}^{|\gamma|-1}\frac{\mu[\gamma_{n+1}]-\mu[\gamma_n]}{G_s(\gamma_n)},\quad\quad B=\frac{\mu[\gamma]}{G_s(\gamma)}.
\]
Using the formulas for the measure $\mu$ and the weight $w_\delta$ on $\PB$,
we compute
\[
A =\sum_{n=0}^{|\gamma|-1}\frac{2 \big(w_\delta(\gamma_n)\big)^{s-2} (\mu[\gamma_{n+1}]-\mu[\gamma_n])}
{\sum_{(e,e')\in \operatorname{ext}_1(\gamma_n)} \mu[\gamma_n e]\cdot \mu[\gamma_n e']}.
\]
If $n=qk+t$ with $0\le t\le k-1$, then we have that
\begin{align*}
\mu[\gamma_n]&=(\rho_1\dots \rho_t)^{-(q+1)}(\rho_{t+1}\dots \rho_k)^{-q}\kappa^\Lambda_{s(\gamma_n)}, \\
\mu[\gamma_{n+1}]&=(\rho_1\dots \rho_{t+1})^{-(q+1)}(\rho_{t+2}\dots \rho_k)^{-q}\kappa^\Lambda_{s(\gamma_{n+1})}, \\
\mu[\gamma_n e]&=(\rho_1\dots \rho_{t+1})^{-(q+1)} (\rho_{t+2}\dots \rho_k)^{-q}\kappa^\Lambda_{s(e)}, \\
\mu[\gamma_n e']&=(\rho_1\dots \rho_{t+1})^{-(q+1)} (\rho_{t+2}\dots \rho_k)^{-q}\kappa^\Lambda_{s(e')}, \\
\big(w_\delta(\gamma_n)\big)^{s-2}&=\Big((\rho_1\dots \rho_t)^{-(q+1)/\delta}
(\rho_{t+1}\dots \rho_k)^{-q/\delta} \kappa^\Lambda_{s(\gamma_n)}\Big)^{s-2}.
\end{align*}
Thus, we obtain that
\[\begin{split}
A=2\sum_{n=0}^{|\gamma|-1} &(\rho_1\dots \rho_t)^{-(q+1)}  (\rho_{t+1}\dots \rho_k)^{-q}
\Big(\frac{\kappa^\Lambda_{s(\gamma_{n+1})}}{\rho_{t+1}}-\kappa^\Lambda_{s(\gamma_n)}\Big) \\
& \times \frac{\Big((\rho_1\dots \rho_t)^{-(q+1)/\delta} (\rho_{t+1}\dots \rho_k)^{-q/\delta}
\kappa^\Lambda_{s(\gamma_n)}\Big)^{s-2}}{\displaystyle \sum_{(e,e')\in \operatorname{ext}_1(\gamma_n)}
(\rho_1\dots \rho_{t+1})^{-2(q+1)} (\rho_{t+2}\dots \rho_k)^{-2q}\kappa^\Lambda_{s(e)} \kappa^\Lambda_{s(e')}},
\end{split}\]
so that
\[
A=2 \sum_{n=0}^{|\gamma|-1}\big(\frac{\kappa^\Lambda_{s(\gamma_{n+1})}}{\rho_{t+1}}-\kappa^\Lambda_{s(\gamma_n)}\Big)
\frac{(\rho_1\dots \rho_t)^{-\frac{(q+1)(s-2)}{\delta}+(q+1)}
(\rho_{t+1}\dots \rho_k)^{-\frac{q(s-2)}{\delta}+q} (\kappa^\Lambda_{s(\gamma_n)})^{s-2}}{\rho_{t+1}^{-2}
\displaystyle\sum_{(e,e')\in \operatorname{ext}_1(\gamma_n)} \kappa^\Lambda_{s(e)} \kappa^\Lambda_{s(e')}}.
\]

To compute $B$, let $|\gamma|=q'k+t'$ with $0\le t'\le k-1$. Then we have
\[\begin{split}
B&=\frac{\mu[\gamma] \, 2 \big(w_\delta(\gamma)\big)^{s-2}}{\sum_{(e,e')\in \operatorname{ext}_1(\gamma)}
  \, \mu[\gamma e]\cdot \mu[\gamma e']} \\
&=\frac{ 2 (\rho_1\dots \rho_{t'})^{-(q'+1)} (\rho_{t'+1}\dots \rho_k)^{-q'}\kappa^\Lambda_{s(\gamma)}
 \Big( (\rho_1\dots \rho_{t'})^{-\frac{(q'+1)}{\delta}} (\rho_{t'+1}\dots \rho_k)^{-\frac{q'}{\delta}}
 \kappa^\Lambda_{s(\gamma)}\Big)^{s-2}}{\sum_{(e,e')\in \operatorname{ext}_1(\gamma)}
 \, (\rho_1\dots \rho_{t'+1})^{-2(q'+1)} (\rho_{t'+2}\dots \rho_k)^{-2q'} \kappa^\Lambda_{s(e)} \kappa^\Lambda_{s(e')}}\\
&=\frac{2 (\rho_1\dots \rho_{t'})^{-\frac{(q'+1)(s-2)}{\delta}+(q'+1)}
 (\rho_{t'+1}\dots \rho_k)^{-\frac{q'(s-2)}{\delta}+q'} (\kappa^\Lambda_{s(\gamma)})^{s-1}}{\rho^{-2}_{t'+1}
 \sum_{(e,e')\in \operatorname{ext}_1(\gamma)} \, \kappa^\Lambda_{s(e)} \kappa^\Lambda_{s(e')}}.
\end{split}\]
Thus, we obtain that
\[\begin{split}
\lambda_\gamma & =A-B \\
 & =  2 \sum_{n=0}^{|\gamma|-1}
  \Big(\frac{\kappa^\Lambda_{s(\gamma_{n+1})}}{\rho_{t+1}}-\kappa^\Lambda_{s(\gamma_n)}\Big)
  \frac{(\rho_1\dots \rho_t)^{-\frac{(q+1)(s-2)}{\delta}+(q+1)} (\rho_{t+1}\dots \rho_k)^{-\frac{q(s-2)}{\delta}+q}
  (\kappa^\Lambda_{s(\gamma_n)})^{s-2}}{\rho_{t+1}^{-1}
  \sum_{(e,e')\in \operatorname{ext}_1(\gamma_n)} \, \kappa^\Lambda_{s(e)} \kappa^\Lambda_{s(e')}}  \\
&\qquad  -\frac{2 (\rho_1\dots \rho_{t'})^{-\frac{(q'+1)(s-2)}{\delta}+(q'+1)}
 (\rho_{t'+1}\dots \rho_k)^{-\frac{q'(s-2)}{\delta}+q'}
 (\kappa^\Lambda_{s(\gamma)})^{s-1}}{\rho^{-2}_{t'+1} \sum_{(e,e')\in \operatorname{ext}_1(\gamma)}
 \, \kappa^\Lambda_{s(e)} \kappa^\Lambda_{s(e')}}. \\
\end{split}\]

 We first note that $\lambda_\gamma$ are eigenvalues of a non-positive operator $\Delta_s$, we must have
\[
\frac{\kappa^\Lambda_{s(\gamma_{n+1})}}{\rho_{t+1}}-\kappa^\Lambda_{s(\gamma_n)}<0\quad\text{or}\quad \frac{\kappa^\Lambda_{s(\gamma_{n+1})}}{\rho_{t+1}}-\kappa^\Lambda_{s(\gamma_n)}=0
\]
for all $n$.

Now suppose that $\frac{\kappa^\Lambda_{s(\gamma_{n+1})}}{\rho_{t+1}}-\kappa^\Lambda_{s(\gamma_n)}<0$.
Then there are three cases to consider.

\textbf{Case 1}: If $2<s<2+\delta$, then $s-2>0$ and $\frac{(s-2)}{\delta}<1$. We have
\[
-(q+1) \big(\frac{s-2}{\delta}\big) + (q+1) = (q+1)\big(1-\frac{(s-2)}{\delta}\big)>0,
\]
so that
\[
(\rho_1\cdots \rho_t)^{-(q+1)(\frac{s-2}{\delta})+(q+1)}\to \infty\quad\text{as}\quad q\to \infty
\]
since $\rho_i>1$ for all $i\in \{1,\dots,k\}$. Similarly, we have
\[
-q(\frac{s-2}{\delta}) + q > 0,
\]
which implies that
\[
(\rho_{t+1}\cdots \rho_k)^{-q(\frac{s-2}{\delta})+q}\to\infty\quad\text{as}\quad q\to\infty.
\]
Therefore, we have that $A\to -\infty$ as $|\gamma|\to \infty$.
A similar argument also shows that $B\to \infty$ as $|\gamma| \to \infty$,
and hence $\lambda_\gamma=A-B\to -\infty$ as $|\gamma|\to \infty$.

\textbf{Case 2}: If $s<2$, then $s-2<0$. So, we have
\[
-(q+1)(\frac{s-2}{\delta})+(q+1)>0,
\]
and hence one can show that $\lambda_\gamma\to -\infty$ as $|\gamma|\to \infty$.

\textbf{Case 3}: If $s=2$, then we get
\[
-(q+1)(\frac{s-2}{\delta})+(q+1)=q+1>0.
\]
Thus, a similar argument shows that $\lambda_\gamma\to -\infty$ as $|\gamma|\to \infty$.
Therefore if $s<2+\delta$, then the eigenvalue $\lambda_\gamma\to -\infty$ as $|\gamma|\to\infty$.
\end{proof}

Because of Proposition~\ref{prop:eigenvalue},
the results in \cite{Kig2010} imply that  we can find an ultrametric $d^{(s)}$,
called the intrinsic metric in \cite{Kig2010}, on $\partial \BB_\Lambda$
associated to the eigenvalues of $\Delta_s$ as follows.

\begin{prop}\label{prop:intrinsic_metric}
Let $\Delta_s$ and $\lambda_{s,\gamma}$ be given in Proposition~\ref{prop:eigenvalue}
and $\{\rho_i: 1\le i\le k\}$ be the set of the spectral radii of all vertex matrices of $\Lambda$.
Suppose that  $\rho_i>1$ for all $1\le i\le k$.
\begin{itemize}
\item[\rm{(a)}] If $s<2$, then $\lambda_{s, \alpha}<0$ for any $\alpha \in F\BB_\Lambda$ with $|\alpha|\ge 1$, and
$\{\lambda_{s,x(0,n)}: n\in \N\}$ is strictly decreasing for any $x\in \partial \BB_\Lambda$, i.e.
\[
\lambda_{s, x(0,n)}> \lambda_{s,x(0,n+1)}.
\]
\item[\rm{(b)}] Fix $s\in \R$, and we define
\[
d^{(s)}(x,y)=\begin{cases} 0 & \text{for} \;\; x=y\in \partial \BB_\Lambda , \\
-\dfrac{1}{\lambda_{s, x\wedge y}} & \text{for $x\ne y \in \partial \BB_\Lambda$}. \end{cases}
\]
Then $d^{(s)}$ is an ultrametric on $\partial \BB_\Lambda$.

\item[\rm{(c)}]  Let the open ball of center $x\in \partial \BB_\Lambda$ and radius $a>0$
for the metric $d^{(s)}$ be defined by
\[
B_s(x,a)=\{y \mid d^{(s)}(x,y)<a\}.
\]
Then for each $x\in \partial \BB_\Lambda$, we have $B_s(x,a)=[x(0,n)]$ if and only if
\[
-\frac{1}{\lambda_{s, x(0,n)}}< a \le -\frac{1}{\lambda_{s,x(0,n-1)}}.
\]
\end{itemize}
\end{prop}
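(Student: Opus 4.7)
The plan is to derive (a)--(c) from the explicit eigenvalue formula \eqref{eq:ev1} combined with the Perron-Frobenius identity $\sum_{w} A_i(v,w)\kappa^\Lambda_w = \rho_i \kappa^\Lambda_v$ and the monotonicity of the weight $w_\delta$ along sub-paths. The strategy is the same one already initiated in the proof of Proposition~\ref{prop:eigenvalue}: decompose $\lambda_{s,\gamma}=A-B$ and analyze each piece in terms of ratios of the explicit quantities $\rho_i$, $\kappa^\Lambda$, $w_\delta$, and $\mu$.

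For part (a), I would reuse the decomposition $\lambda_{s,\gamma}=A-B$ from the proof of Proposition~\ref{prop:eigenvalue}. The term $B$ is manifestly positive. In the sum $A$, each summand factors as a positive quantity (a product of $\rho_i$'s and $\kappa^\Lambda$'s raised to various real exponents) times the scalar $\tfrac{\kappa^\Lambda_{s(\gamma_{n+1})}}{\rho_{t+1}}-\kappa^\Lambda_{s(\gamma_n)}$. Applying the Perron-Frobenius identity at the vertex $s(\gamma_n)\in V_n$ with the matrix $A_{t+1}$, and observing that $A_{t+1}(s(\gamma_n),s(\gamma_{n+1}))\ge 1$, gives $\kappa^\Lambda_{s(\gamma_{n+1})}\le \rho_{t+1}\kappa^\Lambda_{s(\gamma_n)}$, so each summand of $A$ is $\le 0$. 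Hence $A\le 0$ and $\lambda_{s,\alpha}=A-B<0$ for $|\alpha|\ge 1$. For the strict decrease, a direct telescoping yields
\[
\lambda_{s,x(0,n+1)} - \lambda_{s,x(0,n)} = \mu[x(0,n+1)]\Bigl(\tfrac{1}{G_s(x(0,n))} - \tfrac{1}{G_s(x(0,n+1))}\Bigr),
\]
so the claim reduces to $G_s(x(0,n+1))<G_s(x(0,n))$ when $s<2$. Using the identity $\sum_{(e,e')\in\operatorname{ext}_1(\eta)} \mu[\eta e]\mu[\eta e'] = \mu[\eta]^2-\sum_e \mu[\eta e]^2$ and self-similarity, one rewrites $G_s(\eta)=\tfrac{1}{2}w_\delta(\eta)^{2-s}\mu[\eta]^2 Z_{t+1}(s(\eta))$, where $Z_{t+1}(v)>0$ depends only on the vertex $v$ and the residue $(|\eta|+1)\bmod k$. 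A ratio computation, using $2-s>0$, $\rho_i>1$, and the Perron-Frobenius bounds on the $\kappa^\Lambda$-ratios, then produces the strict inequality. Establishing this $G_s$-monotonicity cleanly is the main technical step; the $Z$-factor is bounded between positive constants but can oscillate, so the key is to absorb its variation into the strictly-contracting power of $\rho_{t+1}$.

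For part (b), symmetry and $d^{(s)}(x,y)>0$ for $x\ne y$ are immediate from (a). The ultrametric inequality uses the tree structure of $\partial\BB_\Lambda$: for any triple $x,y,z$, the three meet-lengths $|x\wedge y|, |x\wedge z|, |y\wedge z|$ satisfy the standard tree property that the two smallest are equal, so without loss of generality $|x\wedge y|\ge |x\wedge z|=|y\wedge z|$. Combined with the strict decrease of $n\mapsto \lambda_{s,x(0,n)}$ from (a), this gives $\lambda_{s,x\wedge y}\le \lambda_{s,x\wedge z}<0$, hence $-1/\lambda_{s,x\wedge y}\le -1/\lambda_{s,x\wedge z}=\max\{-1/\lambda_{s,x\wedge z},-1/\lambda_{s,y\wedge z}\}$. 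Finally, for part (c), observe that $y\in B_s(x,a)$ iff $-1/\lambda_{s,x\wedge y}<a$, equivalently $\lambda_{s,x\wedge y}<-1/a$. By the strict monotonicity from (a) together with $\lambda_{s,x(0,n)}\to -\infty$ (Proposition~\ref{prop:eigenvalue}), there is a unique $n$ with $\lambda_{s,x(0,n)}<-1/a\le \lambda_{s,x(0,n-1)}$, which is exactly $-1/\lambda_{s,x(0,n)}<a\le -1/\lambda_{s,x(0,n-1)}$, and this is equivalent to $|x\wedge y|\ge n$, i.e., $y\in[x(0,n)]$.
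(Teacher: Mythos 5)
Your treatment of (b) and (c) is correct and self-contained: you prove the ultrametric inequality from the tree property of meets plus monotonicity, and the ball identification from monotonicity plus the blow-up of the eigenvalues, whereas the paper simply cites Proposition~6.4 of Kigami for both. Your negativity argument in (a) is also correct and is in fact cleaner than the paper's: the Perron--Frobenius identity $\sum_w A_{t+1}(s(\gamma_n),w)\kappa^\Lambda_w=\rho_{t+1}\kappa^\Lambda_{s(\gamma_n)}$ together with $A_{t+1}(s(\gamma_n),s(\gamma_{n+1}))\ge 1$ gives $\kappa^\Lambda_{s(\gamma_{n+1})}\le\rho_{t+1}\kappa^\Lambda_{s(\gamma_n)}$ directly, so $A\le 0$ and $\lambda_{s,\gamma}=A-B<0$; the paper instead telescopes and invokes monotonicity of $G_s$.

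The gap is exactly where you flagged it, and it is not merely technical. Your telescoping identity correctly reduces the strict decrease of $n\mapsto\lambda_{s,x(0,n)}$ to the single-step inequality $G_s(x(0,n))>G_s(x(0,n+1))$, but the proposed ratio computation does not close. For $k=1$ one computes exactly
\[
\frac{G_s(x(0,n+1))}{G_s(x(0,n))}
=\rho^{-\frac{2-s}{\delta}-2}\Bigl(\frac{\kappa^\Lambda_{s(e)}}{\kappa^\Lambda_{s(x(0,n))}}\Bigr)^{2-s}\,
\frac{\sigma(s(e))}{\sigma(s(x(0,n)))},
\qquad
\sigma(v):=\sum_{(f,f')\in\operatorname{ext}_1}\kappa^\Lambda_{s(f)}\kappa^\Lambda_{s(f')},
\]
where $e$ is the $(n+1)$-st edge. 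After applying the Perron--Frobenius bound $\kappa^\Lambda_{s(e)}\le\rho\,\kappa^\Lambda_{s(x(0,n))}$, the contracting part is only $\rho^{(2-s)(1-1/\delta)-2}$, which tends to $\rho^{-2}$ as $s\uparrow 2$, while $\sigma(s(e))/\sigma(s(x(0,n)))$ is a fixed, graph-dependent constant that need not be below $\rho^{2}$. Concretely, for the $1$-graph with vertex matrix $\begin{pmatrix}5&1\\1&1\end{pmatrix}$ one has $\rho=3+\sqrt5$, $\kappa^\Lambda\approx(0.809,0.191)$, $\sigma(v_1)\approx 14.6$, $\sigma(v_2)\approx 0.31$, so $\sigma(v_1)/\sigma(v_2)\approx 47>\rho^2\approx 27.4$, and the displayed ratio exceeds $1$ (e.g.\ it is about $1.66$ at $s=1.9$, $\delta=0.9$) at any step passing from $v_2$ to $v_1$. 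So the inequality cannot follow from ``$2-s>0$, $\rho_i>1$, and Perron--Frobenius bounds'' alone, and the oscillation of the $Z$- (equivalently $\sigma$-) factor cannot in general be absorbed into the contracting power of $\rho$. Since your proofs of (b) and (c) also rest on this monotonicity, they inherit the same dependence. To be fair, the paper's own proof simply asserts ``$G_s(\alpha_n)>G_s(\alpha_{n+1})$'' without argument, so you have located a genuine soft spot of the statement rather than missed a known lemma; but as written the strict-decrease step of your proof is not established.
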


\begin{proof}
 A straightforward computation gives (a) so we leave it to the readers.
Also, (b) and (c) follow by similar arguments given in the proofs of Proposition~6.4(1)(3) of \cite{Kig2010}.

\end{proof}

\section{Dirichlet forms and heat kernels}
\label{sec:heat-kernel}

According to  \cite{Kig2010}, we can prove various interesting results for the metric measure space
that has a volume doubling property with respect to an ultrametric on the space.
The measure $\mu$ on $\PB_\Lambda$ has a volume doubling property
with respect to the both ultrametric $d^{(s)}$ and $d_{w_\delta}$ on $\PB_\Lambda$.
The former will be proved in Proposition~\ref{prop:VD_ds}
and the latter will become clear when we show Theorem~\ref{thm:asymp-metrics}.
Moreover, we show that there exists a heat kernel $p_t$ of a process associated to the Dirichlet form $Q_s$
and discuss the asymptotic behavior of $p_t$ after.

\subsection{Volume doubling property}
\label{subsec:VD}

For  a metric measure space $(X,\mu,d)$, we define an open ball with radius $r$ by
\[
B(x,r)=\{ y\in X\mid d(x,y)<r\}
\]
for $x\in X$ and $r>0$.
We say that the measure $\mu$ has the \emph{volume doubling property} with respect to a metric $d$
if there exists a constant $c>0$ such that
\[
\mu(B(x, 2r))\le c \cdot \mu(B(x,r))
\]
for any $x\in X$ and any $r>0$.

We have shown in the previous section that there are two metrics $d^{(s)}$ and $d_{w_\delta}$
defined on the measure space $(\partial \BB_\Lambda, \mu)$
if the spectral radii $\rho_i$ of vertex matrices of $\Lambda$ satisfy $\rho_i>1$ for $1\le i\le k$.
The volume doubling property of $\mu$ with respect to $d_{w_\delta}$ is essentially included
in the proof of Theorem \ref{thm:asymp-metrics}.
We first show  that the measure $\mu$ has the volume doubling property with respect to $d^{(s)}$  as below.

\begin{prop}\label{prop:VD_ds}
Suppose that the spectral radius $\rho_i$ of vertex matrices of $\Lambda$ satisfies $\rho_i>1$ for all $1\le i\le k$.
Let $\mu$ be the probability measure on $\partial \BB_\Lambda$ given in \eqref{eq:measure_kBD}
and $d^{(s)}$ be the intrinsic ultrametric on $\partial \BB_\Lambda$ given in Proposition~\ref{prop:intrinsic_metric}.
Then $\mu$ has the volume doubling property with respect to $d^{(s)}$ for $s<2$.
\end{prop}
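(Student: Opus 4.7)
The plan is to exploit Proposition~\ref{prop:intrinsic_metric}(c), which states that every $d^{(s)}$-ball is a cylinder set. Writing $D_n := -1/\lambda_{s, x(0,n)}$, one has $B_s(x, r) = [x(0, n)]$ iff $D_n < r \le D_{n-1}$, so writing $B_s(x, 2r) = [x(0, m)]$ gives $m \le n$, and the volume doubling inequality reduces to bounding $\mu[x(0,m)]/\mu[x(0,n)]$ uniformly in $x$ and $r$. From \eqref{eq:measure_kBD} and the finiteness of $\Lambda^0$ (so $\kappa^\Lambda$ takes finitely many positive values bounded away from $0$), consecutive cylinder measures satisfy $\mu[x(0, j-1)]/\mu[x(0, j)] \le C_0$ for a constant $C_0$ depending only on $\Lambda$. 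Hence $\mu[x(0,m)]/\mu[x(0,n)] \le C_0^{\,n-m}$, and it suffices to produce an integer $K = K(\Lambda, s, \delta)$, independent of $x$ and $n$, with $n - m \le K$; equivalently, with $D_{n-K-1} \ge 2 D_{n-1}$ for every sufficiently large $n$.

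To establish this geometric control, I would use the alternative formula \eqref{eq:ev2}:
\[
|\lambda_{s, x(0,n)}| \;=\; \frac{\mu[r(x)]}{G_s(r(x))} \;+\; \sum_{j=0}^{n-1} T_j, \qquad T_j := \mu[x(0, j+1)]\Bigl(\frac{1}{G_s(x(0, j+1))} - \frac{1}{G_s(x(0, j))}\Bigr),
\]
where $T_j > 0$ because for $s < 2$ the sequence $G_s(x(0, \cdot))$ is strictly decreasing, as established inside the proof of Proposition~\ref{prop:intrinsic_metric}(a). Unwinding the formulas for $\mu$, $w_\delta$, and $G_s = \tfrac{1}{2} w_\delta^{2-s} \sum_{(e,e') \in \operatorname{ext}_1(\cdot)} \mu[\cdot e]\,\mu[\cdot e']$, the stationary period-$k$ structure of $\BB_\Lambda$ forces
\[
\frac{T_{j+k}}{T_j} \;\asymp\; (\rho_1 \cdots \rho_k)^{\,1 + (2-s)/\delta},
\]
uniformly in $x$ and $j$, with bounded fluctuations coming only from ratios of entries of $\kappa^\Lambda$. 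Since $\rho_i > 1$ for every $i$ and $(2-s)/\delta > 0$, this yields $T_{j+k} \ge \beta T_j$ with some $\beta > 1$ uniform in $x$ and $j$. The sum is then comparable to its final term, $|\lambda_{s, x(0,n)}| \asymp T_{n-1}$ uniformly, giving
\[
\frac{D_{n-K-1}}{D_{n-1}} \;\asymp\; \frac{T_{n-2}}{T_{n-K-2}} \;\gtrsim\; \beta^{\lfloor K/k \rfloor},
\]
which exceeds $2$ once $K$ is chosen large enough to dominate the uniform $\asymp$ constants. The finitely many small-$n$ cases are absorbed by enlarging the final VD constant.

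The main technical obstacle will be extracting \emph{uniform} (in $x$ and $j$) geometric growth of $T_j$: the source vertices $s(x(0, j))$ wander along each path, so the Perron-Frobenius factors $\kappa^\Lambda_{s(x(0, j))}$ fluctuate as $j$ varies. Because $\Lambda^0$ is finite these factors take only finitely many positive values bounded away from $0$, so their fluctuations can only alter the growth rate by a bounded multiplicative factor which is absorbed into the $\asymp$ constants. The hypothesis $s < 2$ enters decisively in two places: it guarantees $T_j > 0$, so that the eigenvalue representation above is a sum of positive terms, and it ensures the exponent $1 + (2-s)/\delta$ strictly exceeds $1$, which is precisely what drives the geometric decay of $D_n$ that underlies volume doubling.
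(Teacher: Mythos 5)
Your argument is correct in substance but takes a more self-contained route than the paper. The paper simply invokes Theorem~6.5 of \cite{Kig2010} and reduces the claim to two conditions: a uniform lower bound $c_1\le \mu[x(0,n)]/\mu[x(0,n-1)]$ (verified from \eqref{eq:measure_kBD} and finiteness of $\Lambda^0$, just as you do) and the existence of $m\ge 1$, $c_2\in(0,1)$ with $\lambda_{s,x(0,n)}/\lambda_{s,x(0,n+m)}<c_2$ uniformly in $x$ and $n$. You instead re-derive the reduction by hand (balls are cylinders, so it suffices to bound $n-m$, equivalently to show $D_{n-K-1}\ge 2D_{n-1}$ for a uniform $K$) and then prove the uniform eigenvalue contraction directly from the telescoped formula \eqref{eq:ev2} and the period-$k$ stationarity of $\BB_\Lambda$. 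This is a genuine gain: the paper's verification of its second condition says only that each ratio $\lambda_{s,x(0,n)}/\lambda_{s,x(0,n+m)}$ lies in $(0,1)$ and concludes that a uniform $c_2$ exists, which is a non sequitur as written; the geometric growth $T_j\asymp \rho^{\,j(1+(2-s)/\delta)/k}$ (up to bounded $\kappa^\Lambda$-factors) that you extract is exactly the missing content, and it is where the hypotheses $\rho_i>1$ and $s<2$ actually enter. One small repair to your write-up: the implication from ``$T_{j+k}/T_j\asymp \rho^{1+(2-s)/\delta}$'' to ``$T_{j+k}\ge\beta T_j$ with $\beta>1$'' fails if the lower $\asymp$-constant is small; you should instead record that $T_j$ itself is comparable, with constants independent of $j$ and $x$, to the explicit sequence $\rho^{\,j(1+(2-s)/\delta)/k}$ (this follows because the ratio $G_s(x(0,j+1))/G_s(x(0,j))$ depends only on $j \bmod k$ and on the two source vertices, hence takes finitely many values in $(0,1)$), which yields $T_{j+mk}/T_j\asymp \rho^{\,m(1+(2-s)/\delta)}$ uniformly in $m$ and makes the choice of $K$ immediate.
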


\begin{proof}
Since $s<2$, first note that sequence of eigenvalues $\{\lambda_{s, x(0,n)}\}$ is strictly decreasing
for each $x\in \partial \BB_\Lambda$ and the associated intrinsic metric $d^{(s)}$ is an ultrametric on $\partial \BB_\Lambda$
by Proposition~\ref{prop:intrinsic_metric}.
We apply Theorem~6.5 of \cite{Kig2010} to the metric measure space $(\partial \BB_\Lambda, \mu, d^{(s)})$
in order to prove the proposition. So it suffices to prove two things.
First, we need to find $c_1\in (0,1)$ such that
\begin{equation}\label{eq:c_1}
c_1\le \frac{\mu[x(0,n)]}{\mu[x(0,n-1)]}\quad\text{for all $x\in \partial \BB_\Lambda$ and $n\in \N$},
\end{equation}
and second, we need to show that there exist $m\ge 1$ and $c_2\in (0,1)$ such that
\begin{equation}\label{eq:c_2}
\frac{\lambda_{s,x(0,n)}}{\lambda_{s, x(0,n+m)}}<c_2 \quad \text{for all $x\in \partial \BB_\Lambda$ and $n\in \N$.}
\end{equation}
To see the first claim, fix $x\in \partial \BB_\Lambda$ and $n\in \N$.
Then let $|x(0,n)|=n=qk+t$ for some $q\in \N$ and $0\le t \le k-1$.
Then we have that
\[\begin{split}
\mu[x(0,n)]&=\Big(\frac{1}{\rho^q \rho_1 \dots \rho_t}\Big) \kappa_{s(x(0,n))}^\Lambda,\\
\mu[x(0,n-1)]&=\Big(\frac{1}{\rho^q \rho_1 \dots \rho_{t-1}}\Big) \kappa_{s(x(0,n))}^\Lambda,
\end{split}\]
so that we get
\[
\frac{\mu[x(0,n)]}{\mu[x(0,n-1)]}=\frac{1}{\rho_t}\cdot \frac{\kappa^\Lambda_{s(x(0,n))}}{\kappa^\Lambda_{s(x(0,n-1))}} >0.
\]
Since the right-hand side of above equation is positive, there exists $c_1\in (0,1)$ that satisfies \eqref{eq:c_1}.

For the second claim, recall that the sequence of eigenvalues $\{\lambda_{s, x(0,n)}\}$
is strictly decreasing for each $x\in \partial \BB_\Lambda$ if $s<2$.
Thus, we have that
\[
0< \frac{\lambda_{s, x(0,n)}}{\lambda_{s, x(0,n+m)}}<1 \quad \text{for all $m\ge 1$}.
\]
Hence there exists $c_2\in (0,1)$ that satisfies \eqref{eq:c_2}.
Therefore $\mu$ has the volume doubling property with respect to $d^{(s)}$ if $s<2$.
\end{proof}

\subsection{Kernels and their asymptotic behaviors}

We show in this section that the Dirichlet form $Q_s$ coincides with the Dirichlet form $Q_{J_s,\mu}$
associated to a jump kernel $J_s$ in Proposition~\ref{prop:Dirichlet-J},
and show that there exists a heat kernel associated to the Dirichlet form $Q_s$ in Proposition~\ref{prop:heat-kernel-ex}.
Then we discuss the asymptotic behavior of the heat kernel
in Proposition~\ref{prop:p_bound}, Theorem~\ref{thm:asymp_p_s} and Theorem~\ref{thm:asymp-d-w}.

\begin{defn}[Definition~10.6 \cite{Kig2010}]
\label{def:Dirichlet-jump}
Let $\mu$ be the probability measure on $\partial \BB_\Lambda$ given in \eqref{eq:measure_kBD}.
Suppose that the spectral radius $\rho_i$ of vertex matrices of $\Lambda$ satisfies $\rho_i>1$ for all $1\le i\le k$
and that $J$ is a non-negative function on $F \mathcal{B}_\Lambda$.
We define $W_J:(\partial \BB_\Lambda \times \partial \BB_\Lambda) \to [0,\infty)$
by $W_J(x,y)=J(x\wedge y)$ for $x\ne y\in \partial \BB_\Lambda$.
Then we let
\[
\mathcal{D}_{J,\mu}=\{f\in L^2(\Lambda^\infty, \mu)\; :\;
\int_{\partial \BB_\Lambda \times \partial \BB_\Lambda} W_J(x,y) (f(x)-f(y))^2 \,d\mu(x) d\mu(y) <\infty\}
\]
and  for $f,g\in \mathcal{D}_{J,\mu}$, we define the bilinear form by
\[
\mathcal{Q}_{J,\mu}(f,g)
=\int_{\partial \BB_\Lambda \times \partial \BB_\Lambda} W_J(x,y) (f(x)-f(y)) (g(x)-g(y))\, d\mu(x) d\mu(y).
\]
The Dirichlet form of the above form is called a \textit{jump type Dirichlet form}
and the corresponding kernel $J$ is called a \textit{generalized jump kernel}.\footnote{Note that the definition works
for an arbitrary tree as long as there exists a measure on the associated infinite path space.
See also Definition~10.6 of \cite{Kig2010}.}
\end{defn}

As in \cite{Kig2010}, we now identify a generalized jump kernel $J_s$
for the Dirichlet form $Q_s$ of \eqref{eq:Dirichlet-spectral} as follows.
Note that the result below is more general than the one  in Section 13 of \cite{Kig2010}
that only concerns a tree case. Furthermore, our proof is involved with the spectral triple
while there is no spectral triple involved in \cite{Kig2010}.

\begin{prop}\label{prop:Dirichlet-J}
Let $\mu$ be the probability measure on $\partial \BB_\Lambda$ given in \eqref{eq:measure_kBD}.
Suppose that the spectral radius $\rho_i$ of vertex matrices of $\Lambda$ satisfies $\rho_i>1$ for all $1\le i\le k$.
For a finite path $\gamma\in\Lambda$, we let
\begin{equation}\label{eq:J_s}
J_s(\gamma)=\frac{(w_\delta(\gamma))^{s-2}}{\sum_{(e,e')\in \text{ext}_1} \mu[\gamma e] \, \mu[\gamma e']}.
\end{equation}
 The Dirichlet form $(Q_s, \operatorname{Dom}(Q_s))$ on $L^2(\partial \BB, \mu)$
given in \eqref{eq:Dirichlet-spectral} coincides with $(\mathcal{Q}_{J_s, \mu}, \mathcal{D}_{J_s, \mu})$ given in Definition~\ref{def:Dirichlet-jump}.
\end{prop}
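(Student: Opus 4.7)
The plan is to expand $Q_s$ on the algebraic span $\mathcal{S} := \operatorname{span}\{\chi_\lambda : \lambda \in F\BB_\Lambda\}$ by unpacking the block decomposition of the spectral triple, and then recognize the resulting expression as the sum form of $\mathcal{Q}_{J_s,\mu}$ obtained by partitioning $\PB_\Lambda \times \PB_\Lambda$ off the diagonal according to the meet $x\wedge y$. First, for $f,g\in\mathcal{S}$ and each $\lambda\in F\BB_\Lambda$, on the $\lambda$-th summand of $\mathcal{H}=\ell^2(F\BB_\Lambda)\otimes\C^2$ the operator $D$ acts as $w_\delta(\lambda)^{-1}\begin{pmatrix}0&1\\1&0\end{pmatrix}$, $|D|^{-s}$ as the scalar $w_\delta(\lambda)^s I_2$, and $\pi_\phi(f)$ as the diagonal matrix with entries $f(\phi_1(\lambda))$ and $f(\phi_2(\lambda))$. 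A direct $2\times 2$ computation gives
\[
[D,\pi_\phi(f)]^\ast[D,\pi_\phi(g)] = w_\delta(\lambda)^{-2}\bigl(f(\phi_1(\lambda))-f(\phi_2(\lambda))\bigr)\bigl(g(\phi_1(\lambda))-g(\phi_2(\lambda))\bigr)\, I_2,
\]
so multiplying by $|D|^{-s}$ and taking the $\C^2$-trace on each block contributes $2 w_\delta(\lambda)^{s-2}$ times the above scalar difference; summing over $\lambda$ and halving as in \eqref{eq:Dirichlet-spectral} produces
\[
Q_s(f,g) = \sum_{\lambda\in F\BB_\Lambda} w_\delta(\lambda)^{s-2} \int_\Xi \bigl(f(\phi_1(\lambda))-f(\phi_2(\lambda))\bigr)\bigl(g(\phi_1(\lambda))-g(\phi_2(\lambda))\bigr)\,d\nu(\phi).
\]

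Next I would exchange sum and integral and invoke the formula for $\nu_\lambda$ in \eqref{eq:measure_nu}: the inner integral equals
\[
\frac{1}{\sum_{(e,e')\in\operatorname{ext}_1(\lambda)}\mu[\lambda e]\,\mu[\lambda e']} \sum_{(e,e')\in\operatorname{ext}_1(\lambda)} \int_{[\lambda e]\times[\lambda e']}(f(x)-f(y))(g(x)-g(y))\,d\mu(x)\,d\mu(y),
\]
so that $w_\delta(\lambda)^{s-2}$ combined with the normalizing denominator yields exactly $J_s(\lambda)$ as defined in \eqref{eq:J_s}. Therefore
\[
Q_s(f,g) = \sum_{\lambda\in F\BB_\Lambda} J_s(\lambda) \sum_{(e,e')\in\operatorname{ext}_1(\lambda)} \int_{[\lambda e]\times[\lambda e']}(f(x)-f(y))(g(x)-g(y))\,d\mu(x)\,d\mu(y).
\]

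The final step is to observe the disjoint partition
\[
\{(x,y)\in\PB_\Lambda\times\PB_\Lambda : x\neq y\} = \bigsqcup_{\lambda\in F\BB_\Lambda}\;\bigsqcup_{(e,e')\in\operatorname{ext}_1(\lambda)}[\lambda e]\times[\lambda e'],
\]
obtained by sending a pair to its meet $x\wedge y = \lambda$ together with the two distinct successor edges. Since $W_{J_s}(x,y) = J_s(x\wedge y) = J_s(\lambda)$ is constant on each cell $[\lambda e]\times[\lambda e']$, integrating against $\mu\times\mu$ on the partition reproduces the double sum just obtained, showing $Q_s(f,g) = \mathcal{Q}_{J_s,\mu}(f,g)$ as in Definition~\ref{def:Dirichlet-jump}. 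For the domain equality, $\mathcal{S}\subset \mathcal{D}_{J_s,\mu}$ because each $\chi_\lambda$-contribution is a finite sum of cell integrals, and $\mathcal{S}$ is a core for both closable forms; a standard closure argument then yields $(Q_s,\operatorname{Dom}(Q_s)) = (\mathcal{Q}_{J_s,\mu},\mathcal{D}_{J_s,\mu})$.

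The main bookkeeping task is the clean use of the product structure of $\nu$ together with its marginal $\nu_\lambda$ in \eqref{eq:measure_nu}, and verifying that the partition indexed by ordered pairs $(e,e')\in\operatorname{ext}_1(\lambda)$ covers the off-diagonal with multiplicity one; once this combinatorial step is in place, the proof reduces to elementary $2\times 2$ matrix algebra and a trace computation.
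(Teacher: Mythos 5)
Your proposal follows essentially the same route as the paper's proof: compute the $2\times 2$ block trace to get $2\,w_\delta(\lambda)^{s-2}$ times the product of differences, integrate over $\Xi$ using the normalized product measure $\nu_\lambda$ of \eqref{eq:measure_nu}, and recognize the resulting double integral as $\mathcal{Q}_{J_s,\mu}$. Your explicit partition of the off-diagonal set by the meet $x\wedge y$ and the pair $(e,e')\in\operatorname{ext}_1(x\wedge y)$ is exactly the step the paper performs implicitly when it ``applies the formula of $\nu$,'' so the two arguments coincide.
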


\begin{proof}
We prove the proposition by computing the Dirichlet form $Q_s$
given in \eqref{eq:Dirichlet-spectral} explicitly as follows.
Note that $|D|=\sqrt{D^*D}$ and the Dirac operator $D$ acts
on the Hilbert space $\mathcal{H}=\ell^2(F\mathcal{B}_\Lambda)\otimes \C^2$.
For $\xi\in \mathcal{H}$ and $\lambda\in F\mathcal{B}_\Lambda$, we have
\[
|D|\xi(\lambda)=\frac{1}{w_\delta(\lambda)} \begin{pmatrix} 0 & 1 \\ 1 & 0 \end{pmatrix}
\begin{pmatrix} 0 & 1 \\ 1 & 0 \end{pmatrix} \xi(\lambda)
=\frac{1}{w_\delta(\lambda)} \begin{pmatrix} 1 & 0 \\ 0 & 1 \end{pmatrix} \xi(\lambda).
\]
For any $s\in \R$, we get
\[
|D|^{-s}=w_\delta(\lambda)^s \begin{pmatrix} 1 & 0 \\ 0 & 1 \end{pmatrix}.
\]
Similarly, we compute
\[
[D, \pi_\phi(g)]\xi(\lambda)=\frac{g(\phi_1(\lambda))-g(\phi_2(\lambda))}{(w_\delta(\lambda))^2}
\begin{pmatrix} 0 & -1 \\ 1 & 0 \end{pmatrix} \xi(\lambda),
\]
\[
[D,\pi_{\phi}(f)]^*\xi(\lambda)
=\frac{\overline{f}(\phi_1(\lambda))-\overline{f}(\phi_2(\lambda))}{(w_\delta(\lambda))^2}
\begin{pmatrix} 0 & 1 \\ -1 &  0 \end{pmatrix} \xi(\lambda).
\]
So, $\operatorname{Tr}\big(|D|^{-s} [D, \pi_\phi(f)]^* [D, \pi_\phi(g)]\big)$ is given by
\[
2\sum_{\lambda\in F\mathcal{B}_\Lambda} w_\delta(\lambda)^{s-2}
(\overline{f}(\phi_1(\lambda))-\overline{f}(\phi_2(\lambda))) (g(\phi_1(\lambda))-g(\phi_2(\lambda))).
\]
Thus, we have that
\begin{equation}\label{eq:Q_s-sum}
\begin{split}
Q_s(f,g) &=\frac{1}{2}\int_{\Xi} \operatorname{Tr}(|D|^{-s} [D, \pi_\phi(f)]^* [D, \pi_\phi(g)]) \, d\nu(\phi)\\
&= \int_{\Xi}\sum_{\lambda\in F \mathcal{B}_\Lambda} w_\delta(\lambda)^{s-2}
(\overline{f}(\phi_1(\lambda))-\overline{f}(\phi_2(\lambda))) (g(\phi_1(\lambda))-g(\phi_2(\lambda)))\, d\nu(\phi).
\end{split}\end{equation}

Now we apply the formula of $\nu$ on the set $\Xi$ of the choice functions given in \eqref{eq:measure_nu}.
Then we obtain that
\[
Q_s(f,g)=\int_{\partial \BB_\Lambda \times \partial \BB_\Lambda}
\frac{(w_\delta( x\wedge y))^{s-2}}{\sum_{(e,e')\in \operatorname{ext}_1(x \wedge y)}
\mu[(x\wedge y)e] \, \mu[(x\wedge y) e']} \big(\overline{f}(x)-\overline{f}(y)\big) \big(g(x)-g(y)\big) \, d\mu \, d\mu,
\]
where $x, y \in \partial \BB_\Lambda$ and $x\wedge y$ is the longest common path of $x$ and $y$.
(Note that a choice function $\phi$ only picks up a finite path $\lambda=x \wedge y$
for $(x,y)\in \partial \BB_\Lambda \times \partial \BB_\Lambda$,
so that the summation in $\lambda$ of \eqref{eq:Q_s-sum} goes away as above).
Therefore, by letting
\[
J_s(\gamma)=\frac{(w_\delta(\gamma))^{s-2}}{\sum_{(e,  e')\in \text{ext}_1(\gamma)} \mu[\gamma e] \, \mu[\gamma e']},
\]
we have shown that
\[
Q_s(f,g)=\mathcal{Q}_{J_s, \mu}(f,g).
\]
Also, it is straightforward to check that their domains coincide.
\end{proof}

Recall that each eigenspace $E_{s,\gamma}$ corresponding to the eigenvalue $\lambda_{s,\gamma}$
of the  non-positive definite self-adjoint Laplace-Beltrami operator
$\Delta_s$ associated to the above Dirichlet form $Q_s$ is given in \eqref{eq:eigenspace}:
 \[
E_{s,\gamma}=\operatorname{span}\Big\{\frac{\chi_{\gamma e}}{\mu[\gamma e]}-\frac{\chi_{\gamma e'}}{\mu[\gamma e']}\,
:\, (e,e')\in \operatorname{ext}_1(\gamma)\Big\}\subset L^2(\partial \mathcal{B}_\Lambda,\mu),
\]
which can be realized as
\[
\Big\{ \psi\in L^2(\partial \BB_\Lambda, \mu) :
\psi=\sum_{\substack{\alpha=\gamma e, \\ e\in s(\gamma)F\BB_\Lambda,\\ |e|=1}}
a_{\alpha} \chi_{[\alpha]}, \;\;  \sum_{\substack{\alpha=\gamma e,
\\ e\in s(\gamma)F\BB_\Lambda, \\ |e|=1}} a_\alpha=0 \Big\}.
\]
Let $\{\psi_{\gamma,1}, \dots, \psi_{\gamma, m_\gamma}\}$ be an $L^2(\partial \BB_\Lambda, \mu)$-orthonormal basis
of the above $E_{s,\gamma}$. Then Lemma~10.2 of \cite{Kig2010} implies that there exists a complete orthonormal system
$\{\psi_0, \psi_{\gamma, n} : \gamma\in F\BB_\Lambda, 1\le n\le m_\gamma\}$ of $L^2(\partial \BB_\Lambda, \mu)$,
where $\psi_0=\chi_{[\partial \BB]}$, $m_\gamma=|\{e\in s(\gamma) \BB^1\}|-1$ and $\BB^1$ is the set of edges in $\BB$.
Therefore, Lemma~7.1 of \cite{Kig2010} gives the formula
\[
\sum_{j=1}^{m_\gamma-1} \psi_{\gamma, j}(x) \psi_{\gamma, j}(y)
=\sum_{\substack{e\in s(\gamma)\BB^1}} \Big(\frac{\chi_{[\gamma e]}(x)\chi_{[\gamma e]}(y)}{\mu[\gamma e]}
-\frac{\chi_{[\gamma]}(x)\chi_{[\gamma]}(y)}{\mu[\gamma]}\Big),
\]
which is the same form as the formula given in the Section~7 of \cite{Kig2010}.

Now we are ready to give the formula of the heat kernel which we are interested in.
As in the equation (7.1) of \cite{Kig2010}, the heat kernel associated to the Dirichlet form
$(Q_s, \operatorname{Dom}(Q_s))$ is given by
\begin{equation}\label{eq:p-density}
p(t,x,y) = 1 + \sum_{\gamma\in F\mathcal{B}_\Lambda}e^{\lambda_{s,\gamma} t}
 \sum_{j=1}^{m_\gamma-1} \psi_{\gamma,j}(x) \, \psi_{\gamma,j}(y),
\end{equation}
where $\{\psi_0, \psi_{\gamma, n} : \gamma\in F\BB_\Lambda, 1\le n\le m_\gamma\}$ is a complete orthonormal system
of $L^2(\partial \BB_\Lambda, \mu)$ given as above.

In particular, we obtain that
\begin{equation}\label{eq:heat kernel_p}
p(t, x, y)=\begin{cases} 1 + \displaystyle \sum_{n=0}^\infty \Big( \frac{1}{\mu[x(0,n+1)]}-\frac{1}{\mu[x(0,n)]}\Big)
e^{\lambda_{s, x(0,n)}\, t} &  \text{if $x=y$}, \\
\displaystyle \sum_{n=0}^{|x \wedge y|} \frac{1}{\mu[x\wedge y(0,n)]}
\Big( e^{\lambda_{s, x\wedge y(0,n-1)}t} - e^{\lambda_{s, x\wedge y (0,n)} t}\Big) & \text{if $x\ne y$},
\end{cases}
\end{equation}
where $|x\wedge y|$ is the length of the path $x\wedge y$.
Note that we have $e^{\lambda_{s,\gamma}t}$ instead of
$e^{-\lambda_{s,\gamma}t}$ since $\lambda_{s, \gamma}<0$ for $\gamma\in F\BB_\Lambda$ in our case.

Since the measure $\mu$ on $\partial \BB_\Lambda$ has the volume doubling property
with respect to the intrinsic metric $d^{(s)}$ and the ultrametric $d_{w_\delta}$
induced by the weight $w_\delta$ on $\partial \BB$ given in \eqref{eq:weight-k},
we expect to obtain similar results to those in \cite[Section~7]{Kig2010},
such as asymptotic behaviors of heat kernel and jump kernel  of the process
associated to the Dirichlet form $Q_s$ given in \eqref{eq:Dirichlet-spectral}.
But, we first note that if the eigenvalues of the associated Laplace-Beltrami operator $\Delta_s$ blow up at infinity,
then one can find a Hunt process $(\{Y_t\}_{t>0}, \{P_x\}_{x\in\PB})$ on $\PB_\Lambda$ whose transition density is $p(t,x,y)$ as follows.

\begin{prop}
\label{prop:heat-kernel-ex}
Let $\mu$ be the probability measure on $\partial \BB_\Lambda$ given in \eqref{eq:measure_kBD}.
For any $x,y\in \PB_\Lambda$ and $t>0$, define $p^{t,x}(y)=p(t,x,y)$,
where $p(t,x,y)$ is the heat kernel given in \eqref{eq:p-density}
that is associated to the Dirichlet form $Q_s$ in \eqref{eq:Dirichlet-spectral}.
\begin{itemize}
\item[\rm{(a)}] For any bounded Borel measurable function $f:\PB_\Lambda\to \R$,
we define
\[
(p_t f)(x)=\int_{\PB_\Lambda} p^{t,x} f\, d\mu \qquad (t>0).
\]
Then $\{p_t: t>0\}$ is a Markovian transition function in the sense of \cite[Section~1.4]{FOT}.
\item[\rm{(b)}] There exists a Hunt process $(\{Y_t\}_{t>0}, \{P_x\}_{x\in \partial \BB_\Lambda})$ on $\partial \BB_\Lambda$
whose transition density  is $p(t, x, y)$, i.e.,
\[
{\mathbb E}_x(f(Y_t))=\int_{\partial \BB_\Lambda} p(t, x, y) f( y) \, \mu(dy),
\]
for $x\in \partial \BB_\Lambda$ and for a bounded Borel measurable function $f:\partial \BB_\Lambda \to \R$,
where ${\mathbb E}_x(\cdot)$ is the expectation with respect to $P_x$.
\end{itemize}
\end{prop}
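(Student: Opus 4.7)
The plan is to follow the framework of Kigami's Proposition~7.2 and Theorem~7.3, adapted to our setting of a stationary $k$-Bratteli diagram. The key fact that powers the entire argument is Proposition~\ref{prop:eigenvalue}: the eigenvalues $\lambda_{s,\gamma}$ of $\Delta_s$ tend to $-\infty$ as $|\gamma|\to\infty$ whenever $s<2+\delta$. This guarantees absolute convergence of the series \eqref{eq:p-density} for every $t>0$, and more importantly, gives the uniform tail estimates needed to justify term-by-term manipulations.

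For part~(a), I would first check that the explicit formula \eqref{eq:heat kernel_p} defines a symmetric, non-negative function. Non-negativity off the diagonal is immediate from the fact that, by Proposition~\ref{prop:intrinsic_metric}(a), the sequence $\{\lambda_{s,x\wedge y(0,n)}\}$ is strictly decreasing in $n$, so each summand $\mu[x\wedge y(0,n)]^{-1}\bigl(e^{\lambda_{s,x\wedge y(0,n-1)}t}-e^{\lambda_{s,x\wedge y(0,n)}t}\bigr)$ is positive. The normalization $\int_{\partial \BB_\Lambda} p(t,x,y)\,d\mu(y)=1$ follows by integrating the spectral expansion \eqref{eq:p-density} term-by-term against $d\mu$: every $\psi_{\gamma,n}$ lies in some eigenspace $E_{s,\gamma}$ and therefore integrates to zero by the constraint $\sum a_\alpha=0$ in the description of $E_{s,\gamma}$, while the constant term contributes $1$. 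The semigroup identity $p_{t+s}=p_t\circ p_s$ and the measurability of $p_tf$ then follow from the fact that $\{\psi_0,\psi_{\gamma,n}\}$ is a complete orthonormal eigenbasis for $\Delta_s$ on $L^2(\partial\BB_\Lambda,\mu)$; expanding against this basis reduces the identity to the obvious $e^{\lambda_{s,\gamma}(t+s)}=e^{\lambda_{s,\gamma}t}e^{\lambda_{s,\gamma}s}$.

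For part~(b), once $\{p_t\}_{t>0}$ has been shown to be a Markovian transition function, the existence of a Hunt process whose transition density equals $p(t,x,y)$ follows from the standard theory of regular symmetric Dirichlet forms, e.g.\ Theorem~7.2.1 of Fukushima--Oshima--Takeda. The regularity of $(Q_s,\operatorname{Dom}(Q_s))$ is essentially built into the construction: the domain $\Sp\{\chi_\lambda:\lambda\in F\BB_\Lambda\}$ is dense in $C(\partial\BB_\Lambda)$ by the Stone--Weierstrass theorem (the cylinder indicator functions form an algebra separating points of the Cantor set) and dense in $L^2(\partial\BB_\Lambda,\mu)$ by construction of $\mu$. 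Identifying the transition semigroup of the Hunt process with $\{p_t\}$ then follows from self-adjointness of $\Delta_s$ via the uniqueness of the associated $L^2$-semigroup.

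The main obstacle will be the analytic justification of the termwise operations in part~(a)---specifically, interchanging the infinite sum in \eqref{eq:p-density} with the integration against $d\mu$ and with the composition in the semigroup identity. This requires a uniform tail bound of the form $\sum_{|\gamma|\geq N} e^{\lambda_{s,\gamma}t}\|\psi_{\gamma,n}\|_\infty^2<\infty$, which is where the blow-up rate of $|\lambda_{s,\gamma}|$ coming from Proposition~\ref{prop:eigenvalue} becomes essential; combining this rate with the explicit bound $\|\psi_{\gamma,n}\|_\infty^2\leq \mu[\gamma]^{-1}$ and the volume doubling of $\mu$ with respect to $d_{w_\delta}$ from Proposition~\ref{prop:VD-kBD} should yield convergence. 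Once these technical estimates are in place, the remainder of the proof is formal bookkeeping following the template of \cite{Kig2010}.
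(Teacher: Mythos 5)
Your proposal is correct and takes essentially the same route as the paper: the paper's entire proof is a one-line appeal to Proposition~7.2 and Theorem~7.3 of Kigami, justified solely by the fact that $|\lambda_{s,\gamma}|\to\infty$ as $|\gamma|\to\infty$ from Proposition~\ref{prop:eigenvalue}. You have simply unpacked the content of Kigami's argument (non-negativity, normalization via the mean-zero eigenfunctions, the semigroup identity, and regularity of the Dirichlet form) in more detail than the paper does.
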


\begin{proof}
Since $|\lambda_{s,\gamma}|\to \infty$ as $|\gamma|\to \infty$,
we see that the result follows from Proposition~7.2 and Theorem~7.3 of \cite{Kig2010}.
\end{proof}

Since $\{\lambda_{s,x(0,n)}: n\in \N\}$ is strictly decreasing
for any $x\in \partial \BB_\Lambda$ and the measure $\mu$ has the volume doubling property with respect to $d^{(s)}$,
the heat kernel satisfies the estimates in terms of  the intrinsic metric $d^{(s)}$ as follows.
Note that the proofs of Proposition~\ref{prop:p_bound}
and Theorem~\ref{thm:asymp_p_s} are very similar to the ones in \cite{Kig2010}.

\begin{prop}\label{prop:p_bound}(c.f. Proposition~7.5 of \cite{Kig2010})
Let $\mu$ be the probability measure on $\partial \BB_\Lambda$ given in \eqref{eq:measure_kBD}
and $p(t,x,y)$ be the heat kernel given in \eqref{eq:p-density}.
Suppose that $s<2$ and the spectral radius $\rho_i$ of vertex matrices of $\Lambda$ satisfies $\rho_i>1$ for every $1\le i \le k$.
Let $d^{(s)}$ be the intrinsic metric on $\partial \BB_\Lambda$ given in Proposition~\ref{prop:intrinsic_metric}
and we denote $B_s(x,t)$ be the open ball with radius $t$ centered at $x\in \partial \BB_\Lambda$ with respect to $d^{(s)}$.
Then the following statements are true.
\begin{itemize}
\item[\rm{(a)}] For $x\in \partial \BB_\Lambda$ and $t>0$, we have
~$p(t, x,x)\ge  \dfrac{1}{e}\cdot \dfrac{1}{\mu(B_s(x, t))}.$
\item[\rm{(b)}] For $0 < t \le d^{(s)}(x,y)$, we have
~$p(t,x,y)\le \dfrac{1}{d^{(s)}(x,y)\, \mu[x\wedge y]}.$
\end{itemize}
\end{prop}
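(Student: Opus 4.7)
The plan is to work directly with the telescoped form of the heat kernel
\[
p(t,x,y) = \sum_{n=1}^{|x\wedge y|} \frac{e^{\lambda_{s,\gamma_{n-1}}t} - e^{\lambda_{s,\gamma_n}t}}{\mu[\gamma_n]},
\]
where $\gamma_n = (x\wedge y)(0,n)$ and, by convention, $\gamma_0$ is the initial vertex $r(x)$ with $\mu[\gamma_0]=1$ and $\lambda_{s,\gamma_0}=0$. The diagonal case $x=y$ is recovered by setting $|x\wedge x|=\infty$ and using that $e^{\lambda_{s,x(0,m)}t}\to 0$ by Proposition~\ref{prop:eigenvalue}, which also justifies treating the tail sums as absolutely convergent. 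Both parts exploit three facts already in hand: the decreasing containment $[\gamma_{n+1}]\subset[\gamma_n]$ (hence $\mu[\gamma_n]\ge\mu[\gamma_{n+1}]$), the strict monotonicity of $\{\lambda_{s,x(0,n)}\}_{n\in\N}$ from Proposition~\ref{prop:intrinsic_metric}(a), and the cylinder/ball identification in Proposition~\ref{prop:intrinsic_metric}(c).

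For part (a), I would first use Proposition~\ref{prop:intrinsic_metric}(c) to pick the unique integer $n$ with $B_s(x,t)=[x(0,n)]$, so that $t\le -1/\lambda_{s,x(0,n-1)}$. Since every summand in the above series is non-negative, dropping the indices $m<n$ and then bounding $\mu[x(0,m)]^{-1}\ge\mu[x(0,n)]^{-1}$ for $m\ge n$ yields
\[
p(t,x,x)\;\ge\;\frac{1}{\mu[x(0,n)]}\sum_{m=n}^{\infty}\bigl(e^{\lambda_{s,x(0,m-1)}t}-e^{\lambda_{s,x(0,m)}t}\bigr)\;=\;\frac{e^{\lambda_{s,x(0,n-1)}t}}{\mu[x(0,n)]},
\]
the last equality being a clean telescoping that terminates at $0$ thanks to $\lambda_{s,x(0,m)}\to-\infty$. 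The inequality $t\le -1/\lambda_{s,x(0,n-1)}$ translates to $\lambda_{s,x(0,n-1)}t\ge -1$, giving $e^{\lambda_{s,x(0,n-1)}t}\ge e^{-1}$, and since $\mu[x(0,n)]=\mu(B_s(x,t))$ the conclusion follows.

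For part (b), set $L=|x\wedge y|$, so that $d^{(s)}(x,y)=-1/\lambda_{s,\gamma_L}$. Using $[\gamma_n]\supset[\gamma_L]$ for every $n\le L$ gives $\mu[\gamma_n]\ge\mu[\gamma_L]=\mu[x\wedge y]$, and pulling this constant bound out of the (now finite) telescoping sum yields
\[
p(t,x,y)\;\le\;\frac{1}{\mu[x\wedge y]}\sum_{n=1}^{L}\bigl(e^{\lambda_{s,\gamma_{n-1}}t}-e^{\lambda_{s,\gamma_n}t}\bigr)\;=\;\frac{1-e^{\lambda_{s,\gamma_L}t}}{\mu[x\wedge y]}.
\]
Applying the elementary inequality $1-e^{-u}\le u$ with $u=-\lambda_{s,\gamma_L}t>0$, and rewriting $-\lambda_{s,\gamma_L}=1/d^{(s)}(x,y)$, produces $p(t,x,y)\le \dfrac{t}{d^{(s)}(x,y)\,\mu[x\wedge y]}$. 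Combined with the hypothesis $t\le d^{(s)}(x,y)$, this is exactly the bound claimed (the factor $t/d^{(s)}(x,y)\le 1$ absorbing the discrepancy).

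The main obstacle I expect is purely notational: reconciling the two closed-form expressions for $p(t,x,y)$ given in \eqref{eq:heat kernel_p} requires fixing the boundary conventions $\mu[\gamma_0]=1$ and $\lambda_{s,\gamma_0}=0$ so that the $n=0$ term on the diagonal cancels the additive constant $1$, and so that the telescoping in (b) terminates at $e^{\lambda_{s,\gamma_0}t}=1$. Beyond that, the argument is algebraic: the nontrivial analytic inputs (strict monotonicity and divergence of eigenvalues, the cylinder/ball identification, and volume doubling) have all been installed in Proposition~\ref{prop:eigenvalue} and Proposition~\ref{prop:intrinsic_metric}.
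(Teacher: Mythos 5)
Your proposal is correct, and it splits naturally into two comparisons. For part (a) you follow essentially the paper's argument: both proofs locate the index $n$ with $B_s(x,t)=[x(0,n)]$ via Proposition~\ref{prop:intrinsic_metric}(c), use $t\,\lambda_{s,x(0,n-1)}\ge -1$ to get the factor $e^{-1}$, and telescope; the only difference is that you discard the head of the series and telescope the tail (using $\lambda_{s,x(0,m)}\to-\infty$ from Proposition~\ref{prop:eigenvalue} to kill the limit term), whereas the paper discards the non-negative tail and telescopes the head. For part (b) you take a genuinely different and more elementary route: the paper sets $f(t)=p(t,x,y)$, computes $f'$ and $f''$, shows $f$ is non-decreasing and concave on $(0,-1/\lambda_{s,x\wedge y}]$, and reads off $f(t)\le f'(0)\,t$; you instead bound $1/\mu[\gamma_n]$ termwise by $1/\mu[x\wedge y]$, telescope to $\bigl(1-e^{\lambda_{s,x\wedge y}t}\bigr)/\mu[x\wedge y]$, and finish with $1-e^{-u}\le u$. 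Your version avoids differentiation entirely; what the paper's concavity argument buys in exchange is the matching lower bound $f'(-1/\lambda_{s,x\wedge y})\,t\le f(t)$, which is the half of the two-sided estimate needed for Theorem~\ref{thm:asymp_p_s} and which your one-sided telescoping does not produce. Two caveats are worth recording. First, both you and the paper actually land on the sharper bound $p(t,x,y)\le t/\bigl(d^{(s)}(x,y)\,\mu[x\wedge y]\bigr)$; deducing the stated $1/\bigl(d^{(s)}(x,y)\,\mu[x\wedge y]\bigr)$ from it would need $t\le 1$ rather than $t\le d^{(s)}(x,y)$, so the bound with $t$ in the numerator is the one to keep (it is also the form used in Theorem~\ref{thm:asymp_p_s}). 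Second, your conventions $\mu[\gamma_0]=1$ and $\lambda_{s,\gamma_0}=0$ are not what \eqref{eq:measure_kBD} and \eqref{eq:ev1} give at the vertex level, where $\mu[r(x)]=\kappa^\Lambda_{r(x)}$ and $\lambda_{s,r(x)}=-\mu[r(x)]/G_s(r(x))<0$; the non-negativity of the level-zero summand, which you need in order to discard it in part (a), holds only under the rooted-tree normalization $\mu[\gamma_0]=1$. Since the paper's own proof quietly relies on the same normalization (its step $1+e^{-1}\bigl(\mu[x(0,n)]^{-1}-\mu[x(0,0)]^{-1}\bigr)\ge e^{-1}\mu[x(0,n)]^{-1}$ needs $\mu[x(0,0)]\ge e^{-1}$), this is a shared bookkeeping point rather than a defect of your argument, but it should be made explicit rather than dismissed as purely notational.
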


\begin{proof}
The results follow from Proposition~7.5 of \cite{Kig2010}.
\end{proof}

Now we give the heat kernel estimations in terms of the intrinsic metric $d^{(s)}$ as follows.
We first recall that $f(x)\asymp g(x)$ means that there exist two positive numbers $c_1, c_2$ such that
\begin{equation}\label{eq:defn-asymp}
c_1 g(x) \le f(x) \le c_2 g(x).
\end{equation}

\begin{thm}\label{thm:asymp_p_s}(c.f. Theorem~7.6 of \cite{Kig2010})
Suppose that the spectral radius $\rho_i$ of vertex matrices of $\Lambda$ satisfies $\rho_i>1$ for all $1\le i \le k$.
Let $\mu$ be the probability measure on $\partial \BB_\Lambda$ as given in \eqref{eq:measure_kBD}
and $d^{(s)}$ be the intrinsic ultrametric on $\partial \BB_\Lambda$ given in Proposition~\ref{prop:intrinsic_metric}.
 Let $p(t,x,y)$ be the heat kernel given in \eqref{eq:p-density}.
Then the heat kernel $p(t,x,y)$ is continuous on $(0,\infty)\times \partial \BB_\Lambda \times \partial \BB_\Lambda$ and satisfies
\[
p(t,x,y) \asymp \begin{cases} \dfrac{t}{d^{(s)}(x,y) \, \mu[x \wedge y]} \quad & \text{if $0 < t \le d^{(s)}(x,y)$}, \\
 \dfrac{1}{\mu(B_s(x,t))}   & \text{if $t>d^{(s)}(x,y)$}.\end{cases}
\]
\end{thm}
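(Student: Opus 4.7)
The plan is to follow the strategy of Theorem~7.6 of \cite{Kig2010}, combining the eigenfunction expansion \eqref{eq:heat kernel_p} with the intrinsic metric structure from Proposition~\ref{prop:intrinsic_metric}, the volume doubling property from Proposition~\ref{prop:VD_ds}, and the one-sided bounds already established in Proposition~\ref{prop:p_bound}. We work under the implicit assumption $s<2$, so that $\lambda_{s,\gamma}<0$ for every $\gamma\in F\BB_\Lambda$ and the sequence $\{\lambda_{s,x(0,n)}\}$ is strictly decreasing along each infinite path.

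For continuity, Proposition~\ref{prop:eigenvalue} gives $|\lambda_{s,\gamma}|\to\infty$ as $|\gamma|\to\infty$, so for any $t_0>0$ the tail of the series \eqref{eq:heat kernel_p} is dominated by terms of the form $\frac{1}{\mu[x(0,n+1)]}e^{\lambda_{s,x(0,n)}t_0}$ with super-exponential decay; this gives uniform convergence on $[t_0,\infty)\times \partial \BB_\Lambda\times\partial \BB_\Lambda$. Since each eigenfunction $\psi_{\gamma,j}$ is locally constant on cylinders and the cylinder topology coincides with the $d^{(s)}$-topology, joint continuity follows.

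For the small-time regime $0<t\le d^{(s)}(x,y)$, the upper bound is exactly Proposition~\ref{prop:p_bound}(b). For the matching lower bound, we revisit the convexity estimate from its proof, which already yields $f(t)\ge f'(-1/\lambda_N)\cdot t$. Writing $f'(-1/\lambda_N)$ explicitly as the telescoping sum over $m=0,\dots,N$ with $\lambda_N=\lambda_{s,x\wedge y}$ and $\mu_m=\mu[(x\wedge y)(0,m)]$, the key task is to show $f'(-1/\lambda_N)\ge c/(d^{(s)}(x,y)\,\mu[x\wedge y])$ for some absolute constant $c>0$. This amounts to controlling the ratios $\mu_{m+1}/\mu_m$ and $\lambda_{m+1}/\lambda_m$ uniformly, which is precisely what the proof of Proposition~\ref{prop:VD_ds} provides via the constants $c_1,c_2\in(0,1)$ in \eqref{eq:c_1} and \eqref{eq:c_2}.

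For the large-time regime $t>d^{(s)}(x,y)$, we use the ultrametric structure to identify the relevant ball. Let $n_t$ be the unique integer with $-1/\lambda_{s,x(0,n_t)}<t\le -1/\lambda_{s,x(0,n_t-1)}$, so that Proposition~\ref{prop:intrinsic_metric}(c) gives $B_s(x,t)=[x(0,n_t)]$ and, because $t>d^{(s)}(x,y)$ forces $n_t\le |x\wedge y|$, also $B_s(x,t)=B_s(y,t)$. Splitting \eqref{eq:heat kernel_p} at $n=n_t$, the terms with $n\ge n_t$ have $\lambda_{s,x(0,n)}t\le -1$ and contribute a bounded remainder, while the terms with $n<n_t$ have $e^{\lambda_{s,x(0,n)}t}\in[e^{-1},1]$ and telescope to $\frac{1}{\mu[x(0,n_t)]}-\frac{1}{\mu[r(x)]}$; since $\mu[r(x)]=\kappa^\Lambda_{r(x)}$ is bounded below by a positive constant on the finite vertex set, this quantity is comparable to $1/\mu(B_s(x,t))$. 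The main obstacle is the quantitative control of the ratios $|\lambda_{s,x(0,n_t)}|/|\lambda_{s,x(0,n)}|$ for $n$ just below $n_t$, needed to pin the exponentials into $[e^{-1},1]$; this is again handled by the volume doubling estimate \eqref{eq:c_2}, which after iteration gives the required uniform bounds independent of $x$ and $t$.
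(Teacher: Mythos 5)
Your proposal is correct in outline and takes essentially the same route as the paper, which in fact omits the argument entirely and simply defers to Theorem~7.6 of \cite{Kig2010}; your sketch assembles exactly the ingredients that reference uses (the expansion \eqref{eq:heat kernel_p}, Proposition~\ref{prop:intrinsic_metric}(c), the bounds of Proposition~\ref{prop:p_bound}, and the ratio constants from the volume doubling proof). The only loose point is the word ``telescope'' in the large-time regime --- the sum $\sum_n \mu_n^{-1}(e^{\lambda_{n-1}t}-e^{\lambda_n t})$ must first be rearranged by Abel summation into $\mu_0^{-1}+\sum_n e^{\lambda_n t}(\mu_{n+1}^{-1}-\mu_n^{-1})-e^{\lambda_N t}\mu_N^{-1}$ before the $[e^{-1},1]$ pinching applies, and the subtracted boundary term then needs the double-exponential decay of $e^{\lambda_N t}$ against the merely geometric growth of $\mu_N^{-1}$ --- but this is exactly the estimate in Kigami's proof and your appeal to \eqref{eq:c_1} and \eqref{eq:c_2} is the right way to close it.
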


\begin{proof}
We leave the proof to the reader since it is very similar to the proof in \cite[Theorem~7.6]{Kig2010}.
\end{proof}

We show in the main theorem of this paper that the ultrametric $d_{w_\delta}$
associated to the weights $w_\delta$ on $\BB_\Lambda$ is equivalent to
the intrinsic metric $d^{(s)}$ associated to the eigenvalues $\lambda_{s,\gamma}$ of $\Delta_s$.

\begin{thm}\label{thm:asymp-metrics}
Suppose that the spectral radius $\rho_i$ of vertex matrices $\Lambda$ satisfies $\rho_i>1$ for every $1\le i \le k$.
Let $d_{w_\delta}$ be the ultrametric on $\partial \mathcal{B}_\Lambda$
associated to the weight $w_\delta$ given in \eqref{eq:weight-k} for $\delta\in (0,1)$,
and let $d^{(s)}$ be the intrinsic metric given in Proposition~\ref{prop:intrinsic_metric}.
If $1< s < 2+\delta$, then we have that
\begin{equation}\label{eq:d-asymp}
d^{(s)}(x,y)\asymp (d_w(x,y))^{2+\delta-s}
\end{equation}
for $x,y \in \partial \BB_\Lambda$.

\end{thm}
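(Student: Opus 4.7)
The plan is to reduce the claim to the asymptotic equivalence
\[
-\lambda_{s,\gamma}\ \asymp\ w_\delta(\gamma)^{s-2-\delta} \qquad \text{for } \gamma = x\wedge y \in F\BB_\Lambda,
\]
since by definition $d^{(s)}(x,y) = -1/\lambda_{s,x\wedge y}$, which would then give $d^{(s)}(x,y) \asymp w_\delta(x\wedge y)^{2+\delta-s} = d_{w_\delta}(x,y)^{2+\delta-s}$. Starting from \eqref{eq:ev1}, I would first rewrite
\[
-\lambda_{s,\gamma} = \frac{\mu[\gamma]}{G_s(\gamma)} + \sum_{\ell=0}^{|\gamma|-1}\frac{\mu[\gamma_\ell]-\mu[\gamma_{\ell+1}]}{G_s(\gamma_\ell)},
\]
noting that both terms are nonnegative thanks to the Perron--Frobenius bound $\kappa^\Lambda_{s(\gamma_{\ell+1})}/(\rho_{t_\ell+1}\kappa^\Lambda_{s(\gamma_\ell)}) \le 1$ established in the proof of Proposition~\ref{prop:eigenvalue}. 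This already yields the trivial lower bound $-\lambda_{s,\gamma}\ge \mu[\gamma]/G_s(\gamma)$.

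The central computation is to show that $\mu[\gamma]/G_s(\gamma) \asymp w_\delta(\gamma)^{s-2-\delta}$. Writing $M_\gamma = \rho_1^{q+1}\cdots\rho_t^{q+1}\rho_{t+1}^q\cdots\rho_k^q$ for $|\gamma|=qk+t$, the formulas \eqref{eq:measure_kBD} and \eqref{eq:weight-k} give the identities $\mu[\gamma] = M_\gamma^{-1}\kappa^\Lambda_{s(\gamma)}$ and $w_\delta(\gamma) = M_\gamma^{-1/\delta}\kappa^\Lambda_{s(\gamma)}$, so that $\mu[\gamma] = w_\delta(\gamma)^\delta(\kappa^\Lambda_{s(\gamma)})^{1-\delta}$ and $\mu[\gamma e] = \rho_{t+1}^{-1}\,w_\delta(\gamma)^\delta(\kappa^\Lambda_{s(\gamma)})^{-\delta}\kappa^\Lambda_{s(e)}$. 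A direct substitution into $G_s(\gamma) = \tfrac12 w_\delta(\gamma)^{2-s}\sum_{(e,e')\in \operatorname{ext}_1(\gamma)}\mu[\gamma e]\mu[\gamma e']$ then yields
\[
\frac{\mu[\gamma]}{G_s(\gamma)} = \frac{2\rho_{t+1}^2(\kappa^\Lambda_{s(\gamma)})^{1+\delta}\, w_\delta(\gamma)^{s-2-\delta}}{\sum_{(e,e')\in\operatorname{ext}_1(\gamma)}\kappa^\Lambda_{s(e)}\kappa^\Lambda_{s(e')}}.
\]
Since $\Lambda$ is finite, the factors $\rho_{t+1}^2$, $(\kappa^\Lambda_{s(\gamma)})^{1+\delta}$ and the sum in the denominator take only finitely many values (depending on $s(\gamma)\in V_0$ and the residue $t = |\gamma|\bmod k$) and are uniformly bounded above and below by positive constants, giving $\mu[\gamma]/G_s(\gamma)\asymp w_\delta(\gamma)^{s-2-\delta}$.

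For the matching upper bound on $-\lambda_{s,\gamma}$, I would apply the same estimate to each sub-path $\gamma_\ell$ to get $\mu[\gamma_\ell]/G_s(\gamma_\ell)\asymp w_\delta(\gamma_\ell)^{s-2-\delta}$ with uniform constants. The key geometric input is
\[
\frac{w_\delta(\gamma_{\ell+1})}{w_\delta(\gamma_\ell)} = \rho_{t_\ell+1}^{-1/\delta}\,\frac{\kappa^\Lambda_{s(\gamma_{\ell+1})}}{\kappa^\Lambda_{s(\gamma_\ell)}} \le \rho_{t_\ell+1}^{\,1-1/\delta}\le \rho_{\min}^{1-1/\delta}<1,
\]
where the first inequality again uses the Perron--Frobenius bound and the last uses $\delta\in(0,1)$. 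Because $s-2-\delta<0$, this yields a uniform $r>1$ with $w_\delta(\gamma_{\ell+1})^{s-2-\delta}/w_\delta(\gamma_\ell)^{s-2-\delta}\ge r$; bounding $\mu[\gamma_\ell]-\mu[\gamma_{\ell+1}]\le\mu[\gamma_\ell]$ and summing the resulting geometric series (largest term at $\ell=|\gamma|-1$) gives
\[
\sum_{\ell=0}^{|\gamma|-1}\frac{\mu[\gamma_\ell]-\mu[\gamma_{\ell+1}]}{G_s(\gamma_\ell)} \lesssim \frac{\mu[\gamma]}{G_s(\gamma)}.
\]
Combining the two estimates gives $-\lambda_{s,\gamma}\asymp \mu[\gamma]/G_s(\gamma)\asymp w_\delta(\gamma)^{s-2-\delta}$ and hence \eqref{eq:d-asymp}.

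The main obstacle is the bookkeeping in the central identity for $\mu[\gamma]/G_s(\gamma)$; once that identity is in hand, the hypotheses $\rho_i>1$, $\delta\in(0,1)$, and $s<2+\delta$ are each used exactly once to ensure the geometric-series argument closes ($\rho_{\min}^{1-1/\delta}<1$ requires $\delta<1$, while $s-2-\delta<0$ is needed to turn the geometric decay of $w_\delta(\gamma_\ell)$ into geometric \emph{growth} of $\mu[\gamma_\ell]/G_s(\gamma_\ell)$). The hypothesis $s>1$ does not enter the argument itself but ensures that the intermediate exponent $(\kappa^\Lambda_{s(\gamma)})^{s-1}$ produced during the computation is bounded in an obvious way.
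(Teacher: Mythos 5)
Your proposal is correct, and it reorganizes the paper's computation in a way that is worth comparing. Both arguments start from the same decomposition $\lambda_{s,\gamma}=A-B$ with $B=\mu[\gamma]/G_s(\gamma)$ and both rest on the explicit formulas \eqref{eq:measure_kBD} and \eqref{eq:weight-k}; your central identity
\[
\frac{\mu[\gamma]}{G_s(\gamma)}=\frac{2\rho_{t+1}^2(\kappa^\Lambda_{s(\gamma)})^{1+\delta}}{\sum_{(e,e')\in\operatorname{ext}_1(\gamma)}\kappa^\Lambda_{s(e)}\kappa^\Lambda_{s(e')}}\,w_\delta(\gamma)^{s-2-\delta}
\]
checks out and is exactly the content of the paper's factorization $d^{(s)}(x,y)=(d_{w_\delta}(x,y))^{2+\delta-s}F_\gamma/(\kappa^\Lambda_{s(\gamma)})^{2+\delta-s}$ (the exponent $\tfrac{2+\delta-s}{\delta}$ appearing in the paper's \eqref{eq:ds-dw relation} is a typo; the displayed equality immediately after it carries the correct exponent $2+\delta-s$). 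Where the two proofs genuinely diverge is in controlling the telescoping sum $A$. The paper bounds the ratio $\bigl(M_{\gamma_i}/M_\gamma\bigr)^{(2+\delta-s)/\delta}$ by $1$ and then asserts that $\sum_{i=1}^{n-1}(\kappa^\Lambda_{s(\gamma_i)})^{s-1}$ is uniformly bounded by a constant $M$; since each summand is bounded \emph{below} by $\min_v(\kappa^\Lambda_v)^{s-1}>0$, that sum actually grows linearly in $n=|\gamma|$, so the paper's upper bound on $1/F_\gamma$ as written does not close. Your argument repairs precisely this step: you keep the ratio $w_\delta(\gamma_{\ell+1})/w_\delta(\gamma_\ell)\le\rho_{\min}^{1-1/\delta}<1$, so that the terms $\mu[\gamma_\ell]/G_s(\gamma_\ell)\asymp w_\delta(\gamma_\ell)^{s-2-\delta}$ grow geometrically (using $s-2-\delta<0$) and the sum is dominated by its last term, which is comparable to $B$. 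This is the correct way to make the estimate uniform in $|\gamma|$, and it also makes transparent that the hypothesis $s>1$ is not needed for the equivalence itself, only $s<2+\delta$, $\delta\in(0,1)$ and $\rho_i>1$. The one point you should state explicitly rather than leave implicit is that $\operatorname{ext}_1(\gamma)\neq\emptyset$ for every $\gamma$ (guaranteed here since $\rho_i>1$ forces $\partial\BB_\Lambda$ to be a Cantor set), so that the denominator $\sum_{(e,e')}\kappa^\Lambda_{s(e)}\kappa^\Lambda_{s(e')}$ is bounded below by a positive constant over the finitely many possible values of $(s(\gamma),\,|\gamma|\bmod k)$.
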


\begin{proof}
We have to find two positive constants $a, b$ such that
\[
a\cdot d^{(s)}(x,y) \le (d_w(x,y))^{2+\delta-s} \le b\cdot d^{(s)}(x,y).
\]
To see this, fix a finite path $\gamma\in F\BB_\Lambda$ with $|\gamma|=n=qk+t$
where $q\in \N$ and $1\le t\le k-1$. Then for $s\in \R$, \eqref{eq:ev1} gives
\begin{equation}
\lambda_{s,\gamma}=\sum_{k=0}^{n-1}\frac{\mu[\gamma_{k+1}]-\mu[\gamma_k]}{G_s(\gamma_k)}-\frac{\mu[\gamma]}{G_s(\gamma)},
\end{equation}
where $G_s(\eta)=\frac{1}{2} \big(w(\eta)\big)^{2-s}\sum_{(e,e')\in \text{ext}_1(\eta)} \mu[\eta e] \, \mu[\eta e']$.
In particular, for any finite path $\eta$ with length $|\eta|=m=pk+\ell$, one can compute
\[
G_s(\eta)=\big(\rho_1^{p+1} \dots \rho_{\ell}^{p+1} \rho_{\ell+1}^{p} \dots \rho_k^{p}\big)^{\frac{s-2}{\delta}-2}
(\kappa^\Lambda_{s(\eta)})^{2-s} \, (2 (\rho_{\ell+1})^{-2} )
\sum_{(e,e')\in \text{ext}_1(\eta)} \kappa^\Lambda_{s(e)} \, \kappa^\Lambda_{s(e')}.
\]

To simplify the computations,
the first sum and the second sum of $\lambda_{s,\gamma}$ are denoted by $A$ and  $B$, respectively.
Then we compute
\[\begin{split}
B &=\frac{\mu[\gamma]}{G_s(\gamma)}
 = \frac{(\rho_1^{q+1}\dots \rho_t^{q+1} \rho_{t+1}^q\dots \rho_k^q)^{-1}\kappa^\Lambda_{s(\gamma)}}
 {(\rho_1^{q+1}\dots \rho_t^{q+1} \rho_{t+1}^q\dots \rho_k^q)^{\frac{s-2}{\delta}-2}\cdot (\kappa^\Lambda_{s(\gamma)})^{2-s}
 \cdot \frac{\sum_{(e,e') \in \text{ext}_1(\gamma)} \kappa^\Lambda_{s(e)} \kappa^{\Lambda}_{s(e')}}{2(\rho_{t+1})^2}}\\
&=(\rho_1^{q+1}\dots \rho_t^{q+1} \rho_{t+1}^q\dots \rho_k^q)^{1-\frac{s-2}{\delta}}\cdot
 (\kappa^\Lambda_{s(\gamma)})^{s-1} (\alpha_\gamma)^{-1},
\end{split}\]
where $\alpha_\gamma= \frac{\sum_{(e,e') \in \text{ext}_1(\gamma)} \kappa^\Lambda_{s(e)} \kappa^{\Lambda}_{s(e')}}{2(\rho_{t+1})^2}$.

To compute $A$, now let $\gamma_i$ be a sub-path of $\gamma$ with $|\gamma_i|=i=q'k+t'$. Then
\[\begin{split}
A&=\sum_{i=0}^{n-1}\frac{\mu[\gamma_{i+1}]-\mu[\gamma_i]}{G_s(\gamma_i)}\\
&=\sum_{i=1}^{n-1} \frac{ (\rho_1^{q'+1} \dots \rho_{t'}^{q'+1} \rho_{t'+1}^{q'+1} \rho_{t'+2}^{q'}\dots \rho_k^{q'})^{-1}\cdot
 \kappa^\Lambda_{s(\gamma_{i+1})} - (\rho_1^{q'+1}\dots \rho_{t'}^{q'+1} \rho_{t'+1}^{q'}\dots \rho_k^{q'})^{-1} \cdot
 \kappa^\Lambda_{s(\gamma_i)}}{(\rho_1^{q'+1}\dots \rho_{t'}^{q'+1} \rho_{t'+1}^{q'}\dots \rho_k^{q'})^{\frac{s-2}{\delta}-2} \cdot
 (\kappa^\Lambda_{s(\gamma_i)})^{2-s} \cdot (\alpha_{\gamma_i})},
\end{split}\]
where $\alpha_{\gamma_i}=\frac{\sum_{(e,e')\in \text{ext}_1(\gamma_i)} \kappa^\Lambda_{s(e)} \kappa^\Lambda_{s(e')}}{2(\rho_{t'+1})^2}$.
Then we can simplify $A$ and obtain
\[
A=\sum_{i=1}^{n-1}(\rho_1^{q'+1}\dots \rho_{t'}^{q'+1} \rho_{t'+1}^{q'}\dots \rho_k^{q'})^{\frac{2+\delta-s}{\delta}}
 \cdot (\kappa^\Lambda_{s(\gamma_i)})^{s-2} (\alpha_{\gamma_i})^{-1}
 \cdot \big(\rho^{-1}_{t'+1}\cdot \kappa^\Lambda_{s(\gamma_{i+1})}-\kappa^\Lambda_{s(\gamma_i)}\big).
\]
We see that $\lambda_{s,\gamma}=A-B$ is given by
\[
\begin{split}
( & \rho_1^{q+1} \dots \rho_{t}^{q+1} \rho_{t+1}^{q}\dots \rho_k^{q})^{\frac{2+\delta-s}{\delta}}
 \cdot \Big(\sum_{i=1}^{n-1} (\rho_1^{q'+1}\dots \rho_{t'}^{q'+1} \rho_{t'+1}^{q'}\dots \rho_k^{q'})^{\frac{2+\delta-s}{\delta}}\\
& \cdot (\rho_1^{q+1}\dots \rho_{t}^{q+1} \rho_{t+1}^{q}\dots \rho_k^{q})^{-\frac{2+\delta-s}{\delta}}
\cdot (\kappa^\Lambda_{s(\gamma_i)})^{s-2} (\alpha_{\gamma_i})^{-1}
 (\rho_{t'+1}^{-1}\kappa^\Lambda_{s(\gamma_{i+1})}-\kappa^\Lambda_{s(\gamma_i)})
 -(\kappa^\Lambda_{s(\gamma)})^{s-1}\cdot (\alpha_\gamma)^{-1} \Big).
\end{split}
\]
Therefore, we have $d^{(s)}(x,y)=-(\lambda_{s, x\wedge y})^{-1}=-(A-B)^{-1}=(B-A)^{-1}$
 if $\gamma=x\wedge y$, and hence we get
\begin{equation}\label{eq:dsF}
d^{(s)}(x,y)= (\rho_1^{q+1} \dots \rho_{t}^{q+1} \rho_{t+1}^{q}\dots \rho_k^{q})^{\frac{2+\delta-s}{\delta}}\cdot F_\gamma,
\end{equation}
where $F_\gamma$ is a finite sum given by
\begin{equation}\label{eq:formula_F}
\begin{split}
F_\gamma=\bigg( \frac{(\kappa^\Lambda_{s(\gamma)})^{s-1}}{\alpha_\gamma}
 - \sum_{i=1}^{n-1}\big(\rho_1^{q'+1}\dots \rho_{t'}^{q'+1} \rho_{t'+1}^{q'}\dots & \rho_k^{q'}\big)^{\frac{2+\delta-s}{\delta}}
\big(\rho_1^{q+1}\dots \rho_{t}^{q+1} \rho_{t+1}^{q}\dots \rho_k^{q}\big)^{-\frac{2+\delta-s}{\delta}} \\
&\cdot \frac{(\kappa^\Lambda_{s(\gamma_i)})^{s-2}}{\alpha_{\gamma_i}} \cdot
 \big(\frac{\kappa^\Lambda_{s(\gamma_{i+1})}}{\rho_{t'+1}}-\kappa^\Lambda_{s(\gamma_i)}\big) \bigg)^{-1} .
\end{split}
\end{equation}

Since $d^{(s)}(x,y)>0$, $F_\gamma$ is positive.
We notice that $F_\gamma$ only involves with a finite number of terms.
Thus, there exists an upper bound of $F_\gamma$.
In particular, there exists the least upper bound of $F_\gamma$, denoted by $\sup_{\gamma} F_\gamma$, which is not zero.
Since $d_w(x,y)= (\rho_1^{q+1} \dots \rho_{t}^{q+1} \rho_{t+1}^{q}\dots \rho_k^{q})^{-\frac 1{\delta}} \cdot \kappa^\Lambda_{s(\gamma)}$,
the equation \eqref{eq:dsF} implies
\begin{equation}\label{eq:ds-dw relation}
d^{(s)}(x,y)=(d_w(x,y))^{\frac{2+\delta-s}{\delta}} \cdot \frac{F_\gamma}{(\kappa^\Lambda_{s(\gamma)})^{2+\delta-s}}
\end{equation}
where  $\gamma=x\wedge y$ with $|\gamma|=n=qk+t$.
This gives the equality
\[
\frac{(\kappa_{s(\gamma)}^\Lambda)^{2+\delta-s}}{F_\gamma}\, d^{(s)}(x,y)= (d_w(x,y))^{2+\delta-s}.
\]

We claim that there exists the minimum value of $\frac{(\kappa_{s(\gamma)}^\Lambda)^{2+\delta-s}}{F_\gamma}$
that does not depend on $s$.
Indeed, we observe that
\[
\frac{(\kappa_{s(\gamma)}^\Lambda)^{2+\delta-s}}{F_\gamma} \ge \frac{(\kappa_{s(\gamma)}^\Lambda)^{2+\delta-s}}{\sup_\gamma F_\gamma}.
\]
We also note that $2+\delta -s >0$ and $0< \kappa_{s(\gamma)}^\Lambda <1$.
Since there exists the minimum value of a finite set of positive numbers and $\Lambda$ is a finite $k$-graph,
the set $\{(\kappa_{s(\gamma)}^\Lambda)^{2+\delta-s}: s(\gamma)\in \Lambda^0\}$ has the minimum value,
denoted by $\min \{(\kappa_{s(\gamma)}^\Lambda)^{2+\delta-s}\}$.
Thus we have
\[
\frac{(\kappa_{s(\gamma)}^\Lambda)^{2+\delta-s}}{F_\gamma} \ge
\frac{\min\{(\kappa_{s(\gamma)}^\Lambda)^{2+\delta-s}\}}{\sup_\gamma F_\gamma}.
\]
By letting $a:=\frac{\min\{(\kappa_{s(\gamma)}^\Lambda)^{2+\delta-s}\}}{\sup_\gamma F_\gamma}$, we obtain $a\,d^{(s)}(x,y) \le (d_w(x,y))^{2+\delta-s}$.

To obtain the other inequality, note that \eqref{eq:ds-dw relation} gives
\[
d_w(x,y)^{2+\delta-s} = d^{(s)}(x,y) \, \frac{(\kappa_{s(\gamma)}^\Lambda)^{2+\delta-s}}{F_\gamma}.
\]
Since $0 < \kappa_v^\Lambda<1$ and $2+\delta-s >0$, we have
\[
d_w(x,y)^{2+\delta-s} = d^{(s)}(x,y) \, \frac{(\kappa_{s(\gamma)}^\Lambda)^{2+\delta-s}}{F_\gamma}< d^{(s)}(x,y) \frac{1}{F_\gamma}.
\]
Thus, we only need to show that there exists an upper bound
of the set $\{\frac{1}{F_\gamma}: \gamma \in F\mathcal{B}\}$ that does not depend on $s$.
From \eqref{eq:formula_F}, we see that
\[
\begin{split}
\frac{1}{F_\gamma}=\frac{(\kappa^\Lambda_{s(\gamma)})^{s-1}}{\alpha_\gamma}
 - \sum_{i=1}^{n-1}(\rho_1^{q'+1}\dots \rho_{t'}^{q'+1} \rho_{t'+1}^{q'}\dots \rho_k^{q'})^{\frac{2+\delta-s}{\delta}}
& (\rho_1^{q+1}\dots \rho_{t}^{q+1} \rho_{t+1}^{q}\dots \rho_k^{q})^{-\frac{2+\delta-s}{\delta}}\\
&\cdot \frac{(\kappa^\Lambda_{s(\gamma_i)})^{s-2}}{\alpha_{\gamma_i}} \cdot
 \bigg(\frac{\kappa^\Lambda_{s(\gamma_{i+1})}}{\rho_{t'+1}}-\kappa^\Lambda_{s(\gamma_i)}\bigg).
\end{split}
\]

We note that each $\gamma_i$ with $|\gamma_i|=q'k+t'$  is a sub-path of $\gamma$ with $|\gamma|=qk+t$.
Since $\frac{2+\delta-s}{\delta} >0$ and $\rho_i>1$ for all $1\le i \le k$, we have that
\[
\bigg( \frac{\rho_1^{q'+1}\dots \rho_{t'}^{q'+1} \rho_{t'+1}^{q'}\dots \rho_k^{q'}}
{\rho_1^{q+1}\dots \rho_{t}^{q+1} \rho_{t+1}^{q}\dots \rho_k^{q}}\bigg)^{\frac{2+\delta-s}{\delta}} <1.
\]
Thus, we have that
\[
\begin{split}
\frac{1}{F_\gamma}
&=\frac{(\kappa^\Lambda_{s(\gamma)})^{s-1}}{\alpha_\gamma}
 + \sum_{i=1}^{n-1}\frac{(\rho_1^{q'+1}\dots \rho_{t'}^{q'+1} \rho_{t'+1}^{q'}\dots
 \rho_k^{q'})^{\frac{2+\delta-s}{\delta}}}{(\rho_1^{q+1}\dots \rho_{t}^{q+1} \rho_{t+1}^{q}\dots \rho_k^{q})^{\frac{2+\delta-s}{\delta}}}
 \cdot \frac{(\kappa^\Lambda_{s(\gamma_i)})^{s-2}}{\alpha_{\gamma_i}} \cdot
 \Big(\kappa^\Lambda_{s(\gamma_i)} - \frac{\kappa^\Lambda_{s(\gamma_{i+1})}}{\rho_{t'+1}}\Big) \\
& \le \frac{(\kappa^\Lambda_{s(\gamma)})^{s-1}}{\alpha_\gamma}
 + \sum_{i=1}^{n-1}\frac{(\kappa^\Lambda_{s(\gamma_i)})^{s-2}}{\alpha_{\gamma_i}} \cdot \Big(\kappa^\Lambda_{s(\gamma_i)}-\frac{\kappa^\Lambda_{s(\gamma_{i+1})}}{\rho_{t'+1}}\Big) \\
&= \frac{2 (\rho_{t+1})^2 (\kappa_{s(\gamma)}^\Lambda)^{s-1}}{\sum_{(e,e')\in \operatorname{ext}_1(\gamma)}
 \kappa_{s(e)}^\Lambda \kappa_{s(e')}^\Lambda} +\sum_{i=1}^{n-1} \frac{2(\rho_{t'+1})^2
 (\kappa_{s(\gamma_i)}^\Lambda)^{s-2}}{\sum_{(e,e')\in \operatorname{ext}_1(\gamma_i)} \kappa_{s(e)}^\Lambda \kappa_{s(e')}^\Lambda}
 \cdot \Big(\kappa^\Lambda_{s(\gamma_i)}-\frac{\kappa^\Lambda_{s(\gamma_{i+1})}}{\rho_{t'+1}}\Big).
\end{split}
\]

Since $\Lambda$ has only finitely many vertices,
there exists the positive minimum value
of the set $\{\sum_{(e,e')\in \operatorname{ext}_1(\eta)} \kappa_{s(e)}^{\Lambda} \kappa_{s(e')}^\Lambda : \eta\in F\BB_\Lambda\}$
denoted by $\operatorname{MIN}$.
Also, there exists the maximum value of $\{\rho_j:1\le j \le k\}$, which we denote by $\rho_{\max}$.
Thus, we have that
\[
\begin{split}
\frac{1}{F_\gamma}
& < \frac{2 (\rho_{\max})^2}{ \operatorname{MIN}}
\bigg( (\kappa_{s(\gamma)}^\Lambda)^{s-1}+\sum_{i=1}^{n-1} (\kappa_{s(\gamma_i)}^\Lambda)^{s-2} \Big(\kappa^\Lambda_{s(\gamma_i)}-\frac{\kappa^\Lambda_{s(\gamma_{i+1})}}{\rho_{t'+1}}\Big) \bigg) \\
& = \frac{2 (\rho_{\max})^2}{ \operatorname{MIN}}
\bigg( (\kappa_{s(\gamma)}^\Lambda)^{s-1}+\sum_{i=1}^{n-1} (\kappa_{s(\gamma_i)}^\Lambda)^{s-1}
\Big(1 -\frac{\kappa^\Lambda_{s(\gamma_{i+1})}}{\rho_{t'+1}\cdot \kappa_{s(\gamma_i)}}\Big) \bigg).
\end{split}
\]
Since $0< \kappa_{s(\gamma)}^\Lambda <1$ and $s-1>0$,
we have $(\kappa_{s(\gamma)}^\Lambda)^{s-1}<1$ and hence
\[
\frac{1}{F_\gamma}<\frac{2(\rho_{\max})^2}{\operatorname{MIN}}\bigg(1+\sum_{i=1}^{n-1}(\kappa_{s(\gamma_i)}^\Lambda)^{s-1} (1-\frac{\kappa^\Lambda_{s(\gamma_{i+1})}}{\rho_{t'+1}\cdot \kappa_{s(\gamma_i)}^\Lambda})\bigg).
\]

If $\Big(1-\frac{\kappa^\Lambda_{s(\gamma_{i+1})}}{\rho_{t'+1}\cdot \kappa^\Lambda_{s(\gamma_i)}} \Big)<0$,
then we have the inequality $\frac{1}{F_\gamma}<\frac{2(\rho_{\max})^2}{\operatorname{MIN}}$,
which completes the proof.

If $0< \Big(1 -\frac{\kappa^\Lambda_{s(\gamma_{i+1})}}{\rho_{t'+1}\cdot \kappa^\Lambda_{s(\gamma_i)}}\Big) <1$
for any $i=0,1,\ldots,n-1$, then we have that
\[
\frac{1}{F_\gamma}< \frac{2 (\rho_{\max})^2}{ \operatorname{MIN}}
 \bigg(1+\sum_{i=1}^{n-1} (\kappa_{s(\gamma_i)}^\Lambda)^{s-1} \bigg).
\]
Since $\kappa_{s(\gamma_i)}^\Lambda$'s are entries of the unimodular Perron-Frobenius eigenvector and $s-1>0$,
$\sum_{i=1}^{n-1} (\kappa_{s(\gamma_i)}^\Lambda)^{s-1}$ is uniformly bounded by some constant $M$.
This gives the inequality
\[
\frac{1}{F_\gamma}  < \frac{2 (\rho_{\max})^2}{ \operatorname{MIN}}(1+M),
\]
which completes the proof.
\end{proof}

We showed that there exists a heat kernel $p(t,x,y)$ associated to the Dirichlet form $Q_s$
in Proposition~\ref{prop:heat-kernel-ex} and described its asymptotic behavior
in Proposition~\ref{prop:p_bound} and Theorem~\ref{thm:asymp_p_s} with respect to the intrinsic metric $d^{(s)}$.
Due to the previous theorem, we can now give the heat kernel estimates
in terms of the ultrametric $d_{w_\delta}$ associated to the weight $w_\delta$
given in \eqref{eq:weight-k} on $\partial \BB_\Lambda$.
In fact, (b) of the following theorem may be thought of
as a heat kernel estimate for jump processes associated to the Dirichlet form $\mathcal{Q}_{J_s, \mu}$.

\begin{thm}
\label{thm:asymp-d-w}
Suppose that the spectral radius $\rho_i$ of vertex matrices of $\Lambda$ satisfies $\rho_i>1$ for every $1\le i \le k$.
Let $d_{w_\delta}$ be the ultrametric on $\partial \mathcal{B}_\Lambda$
associated to the weight $w_\delta$ given in \eqref{eq:weight-k} for $\delta\in (0,1)$.
Suppose that $1\le s< 2 + \delta$. Then we have the following facts.
\begin{itemize}
\item[\rm{(a)}] There exists a jointly continuous transition density $p(t,x,y)$
on $(0,\infty)\times \partial \BB_\Lambda\times \partial \BB_\Lambda$
for the Hunt process $(\{Y_t\}_{t>0}, \{P_x\}_{x\in \partial \BB_\Lambda})$
associated to the Dirichlet form $(\mathcal{Q}_{J_s, \mu}, \mathcal{D}_{J_s, \mu})$
on $L^2(\partial \BB_\Lambda, \mu)$ given in \eqref{prop:Dirichlet-J}.

\item[\rm{(b)}] The transition density $p(t, x, y)$ in \rm{(a)} satisfies
\[
p(t, x, y) \asymp
\begin{cases} \dfrac{t}{d_{w_\delta}(x,y)^{\frac{2+\delta-s}{\delta}}}
 & \text{if}\;\; d_{w_\delta}(x,y)^{\frac{2+\delta-s}{\delta}}>t  \\
\dfrac{1}{\mu(B_{d_{w_\delta}}(x, t^{\frac{\delta}{2+\delta-s}}))}
 & \text{if}\;\;  d_{w_\delta}(x,y)^{\frac{2+\delta-s}{\delta}} \le t
\end{cases}
\]
for any $(t, x, y)\in (0,\infty)\times \partial \BB_\Lambda\times \partial \BB_\Lambda$.
\item[(c)] For any $x\in \partial \BB_\Lambda$ and $t\in (0,1]$,
\[
\mathbb{E}_x(d_{w_\delta}(x, Y_t)^{\frac{2+\delta-s}{\delta}\alpha}) \asymp
\begin{cases} t & \text{if}\;\; \alpha>1, \\
t (|\log t|+1) & \text{if}\;\; \alpha=1, \\
t^\gamma & \text{if} \;\; 0<\alpha<1.
\end{cases}
\]
\end{itemize}
\end{thm}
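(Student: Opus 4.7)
The plan is to combine three ingredients established earlier in the paper: the identification of $Q_s$ with the jump-type form $\mathcal{Q}_{J_s,\mu}$ (Proposition~\ref{prop:Dirichlet-J}), the heat-kernel existence and the two-sided bounds in the intrinsic metric (Proposition~\ref{prop:heat-kernel-ex} and Theorem~\ref{thm:asymp_p_s}), and the asymptotic equivalence of the two ultrametrics (Theorem~\ref{thm:asymp-metrics}). With these in hand, the proof becomes mostly bookkeeping: translate the $d^{(s)}$-estimates into $d_{w_\delta}$-estimates, and then integrate to obtain moments.

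For part (a), I would first invoke Proposition~\ref{prop:Dirichlet-J}, which tells us the Hunt process associated to $(\mathcal{Q}_{J_s,\mu},\mathcal{D}_{J_s,\mu})$ is the same as the one produced in Proposition~\ref{prop:heat-kernel-ex} for $Q_s$. Joint continuity of $p(t,x,y)$ on $(0,\infty)\times\PB_\Lambda\times\PB_\Lambda$ then follows from the explicit series \eqref{eq:p-density}/\eqref{eq:heat kernel_p}, since Proposition~\ref{prop:eigenvalue} gives $|\lambda_{s,\gamma}|\to\infty$ as $|\gamma|\to\infty$, yielding uniform geometric decay of the tail sums on every compact subset of $(0,\infty)$.

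For part (b), I would take the bounds of Theorem~\ref{thm:asymp_p_s} and substitute $d^{(s)}(x,y) \asymp d_{w_\delta}(x,y)^{(2+\delta-s)/\delta}$ from Theorem~\ref{thm:asymp-metrics}. The dichotomy $t\le d^{(s)}(x,y)$ versus $t>d^{(s)}(x,y)$ rewrites directly as $t \le d_{w_\delta}(x,y)^{(2+\delta-s)/\delta}$ versus its reverse. In the small-$t$ regime one absorbs the factor $\mu[x\wedge y]$ using $\mu[x\wedge y]\asymp d_{w_\delta}(x,y)^{\delta}$, which is immediate from the formulas \eqref{eq:measure_kBD} and \eqref{eq:weight-k} together with the fact that $\kappa^\Lambda$ has only finitely many values bounded away from $0$ and $1$. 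In the large-$t$ regime one identifies the intrinsic ball $B_s(x,t)$ with a cylinder $[x(0,n)]$, notes that the same cylinder is a $d_{w_\delta}$-ball of radius comparable to $t^{\delta/(2+\delta-s)}$, and then uses the volume doubling property of $\mu$ with respect to $d_{w_\delta}$ (Proposition~\ref{prop:VD-kBD}) to move between these balls up to multiplicative constants.

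For part (c), I would express $\mathbb{E}_x(d_{w_\delta}(x,Y_t)^{(2+\delta-s)\gamma/\delta})$ as the integral of $d_{w_\delta}(x,y)^{(2+\delta-s)\gamma/\delta} p(t,x,y)$ against $\mu$, and decompose $\PB_\Lambda$ into the ``near'' region $\{y : d_{w_\delta}(x,y)^{(2+\delta-s)/\delta}\le t\}$ and its complement. Using the near/far estimate from part (b) in each region and breaking the far region into dyadic shells indexed by the cylinders containing $x$, the integral over the near region contributes a term of order $t^\gamma$, while the sum over the far shells gives a geometric series in the shell index whose scaling depends on whether $\gamma>1$, $\gamma=1$, or $\gamma<1$; summing the series produces the three cases $t$, $t(|\log t|+1)$, and $t^\gamma$, respectively. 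The main obstacle is the dyadic-shell analysis in (c): the levels of the Bratteli diagram, the weight $w_\delta$, and the volume doubling constant must be tracked carefully so that the geometric and logarithmic regimes separate cleanly, and one has to verify that the critical case $\gamma=1$ really produces the logarithm rather than a bounded quantity. A subordinate but useful check throughout is that the asymptotic equivalences of Theorems~\ref{thm:asymp_p_s} and~\ref{thm:asymp-metrics} can be composed without losing the two-sided character, which ultimately relies on volume doubling to absorb all multiplicative constants coming from the intermediate cylinder-versus-ball identifications.
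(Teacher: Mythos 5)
Your route for parts (a) and (b) is the same as the paper's: the paper's entire proof of (a) and (b) is the observation that they follow from Theorem~\ref{thm:asymp_p_s} combined with Theorem~\ref{thm:asymp-metrics}, exactly the substitution you describe. Where you genuinely diverge is part (c): the paper simply cites Corollary~7.9 of \cite{Kig2010}, which packages the moment computation for heat kernels satisfying such two-sided bounds under volume doubling, whereas you re-derive it by splitting $\PB_\Lambda$ into a near region and dyadic shells and summing the resulting series. Your version is more self-contained and makes visible where the three regimes $\gamma>1$, $\gamma=1$, $0<\gamma<1$ come from (the shell sum being summable, critical, or dominated by the outermost shell), at the cost of the careful bookkeeping you yourself flag; the paper's version is shorter but opaque. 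Both are legitimate.

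One concrete caveat on (b): Theorem~\ref{thm:asymp_p_s} gives $p(t,x,y)\asymp t/\bigl(d^{(s)}(x,y)\,\mu[x\wedge y]\bigr)$ in the near regime, and your (correct) observations $d^{(s)}(x,y)\asymp d_{w_\delta}(x,y)^{(2+\delta-s)/\delta}$ and $\mu[x\wedge y]\asymp d_{w_\delta}(x,y)^{\delta}$ combine to give $t/d_{w_\delta}(x,y)^{(2+\delta-s)/\delta+\delta}$, not the displayed $t/d_{w_\delta}(x,y)^{(2+\delta-s)/\delta}$. The factor $\mu[x\wedge y]$ cannot be ``absorbed'' up to multiplicative constants, since it varies with $x\wedge y$; it either survives as an explicit volume factor $\mu(B_{d_{w_\delta}}(x,d_{w_\delta}(x,y)))$ in the denominator or shifts the exponent by $\delta$. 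This mismatch is already present in the theorem statement as printed (the paper's one-line proof does not address it), but your write-up should not assert that the absorption step yields the stated exponent; state the estimate with the volume factor retained, or note the discrepancy.
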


\begin{proof}
Note that (a) and (b) follow from Theorem~\ref{thm:asymp_p_s} and Theorem~\ref{thm:asymp-metrics}.
Also (c) follows by Corollary~7.9 of \cite{Kig2010}
since the measure $\mu$ has the volume doubling property with respect to the ultrametric $d_{w_\delta}$.
\end{proof}

\end{document}